\title{\textbf{Lagrangian torus invariants using $ECH=SWF$}}
\author{\Large Chris Gerig}
\numberwithin{equation}{section}
\newtheorem{theorem}{Theorem}[section]
\newtheorem{prop}[theorem]{Proposition}
\newtheorem{cor}[theorem]{Corollary}
\newtheorem{lemma}[theorem]{Lemma}
\newtheorem{lemma-definition}[theorem]{Lemma-Definition}
\theoremstyle{definition}
\newtheorem{definition}[theorem]{Definition}
\newtheorem{remark}[theorem]{Remark}
\newtheorem{convention}[theorem]{Convention}
\renewcommand{\SS}{{\mathbb S}}
\newcommand{\CC}{{\mathbb C}}
\newcommand{\QQ}{{\mathbb Q}}
\newcommand{\RR}{{\mathbb R}}
\newcommand{\NN}{{\mathbb N}}
\newcommand{\ZZ}{{\mathbb Z}}
\newcommand{\TT}{{\mathbb T}}
\newcommand{\nN}{{\mathcal N}}
\newcommand{\aA}{{\mathcal A}}
\newcommand{\cC}{{\mathcal C}}
\newcommand{\mM}{{\mathcal M}}
\newcommand{\vV}{{\mathcal V}}
\newcommand{\bB}{{\mathcal B}}
\newcommand{\lL}{{\mathcal L}}
\newcommand{\op}{\operatorname}
\newcommand{\Diff}{\op{Diff}}
\newcommand{\Spinc}{\op{Spin}^c}
\newcommand{\End}{\op{End}}
\renewcommand{\ker}{\op{Ker}}
\newcommand{\cl}{\op{cl}}
\newcommand{\PD}{\op{PD}}
\newcommand{\gr}{\op{gr}}
\renewcommand{\1}{\mathds{1}}
\newcommand{\e}{\varepsilon}
\newcommand{\crit}{\op{crit}}
\newcommand{\grad}{\op{grad}}
\newcommand{\bA}{\mathbf A}
\newcommand{\fs}{\mathfrak s}
\newcommand{\fp}{\mathfrak p}
\newcommand{\fq}{\mathfrak q}
\newcommand{\fd}{\mathfrak d}
\newcommand{\fM}{\mathfrak M}
\newcommand{\fc}{\mathfrak c}
\newcommand{\Hfrom}{\widehat{\mathit{HM}}}
\newcommand{\Cfrom}{\widehat{\mathit{CM}}}
    \def\Hto{%
       \setbox0=\hbox{$\widehat{\mathit{HM}}$}
       \setbox1=\hbox{$\mathit{HM}$}
       \dimen0=1.1\ht0
       \advance\dimen0 by 1.17\ht1
       \smash{\mskip2mu\raise\dimen0\rlap{%
          \begin{turn}{180}
              {$\widehat{\phantom{\mathit{HM}}}$}
           \end{turn}} \mskip-2mu    
                \mathit{HM}
    }{\vphantom{\widehat{\mathit{HM}}}}{}}
    \def\Cto{%
       \setbox0=\hbox{$\widehat{\mathit{CM}}$}
       \setbox1=\hbox{$\mathit{CM}$}
       \dimen0=1.1\ht0
       \advance\dimen0 by 1.17\ht1
       \smash{\mskip2mu\raise\dimen0\rlap{%
          \begin{turn}{180}
              {$\widehat{\phantom{\mathit{CM}}}$}
           \end{turn}} \mskip-2mu    
                \mathit{CM}
    }{\vphantom{\widehat{\mathit{CM}}}}{}}
\numberwithin{equation}{section}
\definecolor{blue}{rgb}{0,0,1}
\definecolor{red}{rgb}{1,0,0}
\definecolor{green}{rgb}{0,.7,0}
\begin{document}
\maketitle

\begin{abstract}
We construct distinguished elements in the embedded contact homology (and monopole Floer homology) of a 3-torus, associated with Lagrangian tori in symplectic 4-manifolds and their isotopy classes. They turn out not to be new invariants, instead they repackage the Gromov (and Seiberg--Witten) invariants of various torus surgeries. We then recover a result of Morgan--Mrowka--Szab\'o on product formulas for the Seiberg--Witten invariants along 3-tori.
\end{abstract}

%%%%%%%%%%%%%%%%%%%%%%%%%%%%%%%%%%%%%%%%%%%%%%%%%%%%
%%%%%%%%%%%%%%%%%%%%%%%%%%%%%%%%%%%%%%%%%%%%%%%%%%%%
\section{Introduction and main result}
\label{Introduction and main result}

To study Lagrangian submanifolds of a given symplectic 4-manifold, one mantra is to replace ``Lagrangian boundary conditions'' with ``asymptotic Reeb orbit conditions'' for the development of tools involving pseudoholomorphic curves. As a clarification, fix a tubular neighborhood of the Lagrangian. Then instead of considering tools defined by counts of pseudoholomorphic curves in the 4-manifold that have boundary on the Lagrangian, we count pseudoholomorphic curves in the complement of the tubular neighborhood that have punctures asymptotic to certain circles along the boundary of the tubular neighborhood. These tools are inherently related, by shrinking the tubular neighborhood to its core. One such tool is the \textit{ECH cobordism map} induced by the complement of a suitable tubular neighborhood of a Lagrangian torus. Can ECH then help detect knottedness of Lagrangian tori? The general problem of finding non-isotopic Lagrangian tori has been well studied before using other techniques, see for example \cite{DGI, EP:Luttinger, EP:problem, EP:unknotted, CieliebakMohnke:punctured, Vidussi:LagrSurfaces, FS:Lagrangian, Knapp:LagrTori, Mohnke:needle}.

Fix a Lagrangian torus $L\subset (X,\omega)$ in a connected symplectic 4-manifold. By the (Weinstein) Lagrangian neighborhood theorem, the unit disk cotangent bundle of $L$ symplectically embeds in $X$ as a tubular neighborhood of $L$ (sending the 0-section to $L$) such that $UT^* L$ is a contact hypersurface equipped with the tautological 1-form $\lambda_0$. In particular, since $\nN$ may also be viewed as the unit disk normal bundle of $L$, this torus must have self-intersection number zero:
$$L\cdot L=c_1(\text{normal bundle})=c_1(T^* L)=-\chi(L)=0$$
Taking such an arbitrarily small Weinstein tubular neighborhood $\nN$, the canonical trivialization $T^* L\cong L\times\CC^2$ in terms of conjugate variables defines a framing of $\nN$. The resulting symplectic manifold
$$(X_0,\omega):=(X-\nN,\omega|_{X-\nN})$$
has concave tight contact boundary ($UT^* L,\lambda_0)$.

Regarding the relation between Lagrangian boundary conditions and asymptotic Reeb orbit conditions, the Reeb vector field on $(UT^* L,\lambda_0)$ generates the geodesic flow of the flat product Riemannian metric on $L$. That is, we can identify each point of $T^* L$ with a point of $TL$ using the metric, and then the value of the Reeb field at that point of the (unit) cotangent bundle is the corresponding (unit) vector parallel to $L$. So the Reeb orbits correspond to the geodesics under the projection $UT^* L\to L$. Regarding the relation between pseudoholomorphic curves in either $X$ or $X_0$, such surfaces represent relative homology classes in either $H_2(X,L;\ZZ)$ or $H_2(X_0,UT^* L;\ZZ)$ and these classes are identified by excision: $H_2(X_0,UT^* L;\ZZ)\cong H_2(X,\nN;\ZZ)\cong H_2(X,L;\ZZ)$.

Parametrize the 2-torus
$$L\cong T^2=S^1\times S^1$$
by $(x,y)\in(\RR/2\pi\ZZ)^2$ and parametrize the 3-torus total space
$$UT^* L\cong T^3=S^1\times S^1\times S^1$$
by $(x,y,\theta)\in(\RR/2\pi\ZZ)^3$, so that
$$\lambda_0=\cos\theta\,dx+\sin\theta\,dy$$
Its contact structure is $\xi_0=\langle\partial_\theta,\sin\theta\,\partial_x-\cos\theta\,\partial_y\rangle$, and $c_1(\xi_0)=0$ because the section $\partial_\theta\in\Gamma(\xi_0)$ has no zeros.

Associated with the contact manifold $(T^3,\xi_0)$ is its embedded contact homology $ECH_*(T^3,\xi_0)$, a Floer homology whose chain complex is (roughly speaking) generated by sets of Reeb orbits with respect to a generic rescaling of $\lambda_0$. To define invariants of $L$, we first count certain pseudoholomorphic curves in a completion of $(X_0,\omega)$ which assemble into a cycle in the ECH chain complex of $T^3$ for a particular rescaling of $\lambda_0$. Then we use a gluing formula (decomposing $X=X_0\cup\nN$) to compute this element in ECH explicitly. Specifically,

\bigskip\noindent
\textbf{Main Result 1.} \textit{Given $(X,\omega,L)$ as above such that $X$ is minimal\footnote{A manifold is \textit{minimal} if there are no \textit{exceptional spheres}, smoothly embedded 2-spheres of self-intersection $-1$. Such spheres have negative (ECH) index multiple covers and cause complications when trying to count them.}, fix $A\in H_2(X_0,\partial X_0;\ZZ)$ such that $\partial A=0$. There is a deformation of $\nN$ such that $\partial X_0$ carries a nondegenerate contact form $\lambda_A$ with contact structure $\xi_0$ (see Lemma~\ref{lem:nbhd}), so that the following holds. For suitably generic almost complex structure $J$ on a completion of $(X_0,\omega)$ there is a well-defined element
$$Gr_L(A)\in ECH_0(T^3,\xi_0)\cong (\ZZ/2)^3$$
given by weighted counts of (possibly disjoint and multiply covered) $J$-holomorphic curves in the completion of $(X_0,\omega)$ which represent $A$ and are asymptotic to Reeb orbits with respect to $\lambda_A$ whose total homology class in $T^3$ is trivial (see Theorem~\ref{thm:class} and Definition~\ref{defn:Gr}).}

\textit{Identify this Floer homology with $H_2(T^3;\ZZ/2)\cong (\ZZ/2)^3$ whose basis is determined (as specified in Section~\ref{Floer homologies of 3-tori}) by a fixed basis $\lbrace x,y,\bar\theta\rbrace$ of $H_1(T^3;\ZZ/2)$. Then
$$Gr_L(A)=\left(Gr_{X_{f_{1,1,0}}}(A)-Gr_X(A),\;Gr_{X_{f_{1,0,1}}}(A)-Gr_X(A),\;Gr_X(A)\right)$$
where $X_{f_{1,r,s}}$ is the symplectic manifold obtained by Luttinger surgery of $X$ along $L$ using the contactomorphism\footnote{This map is given explicitly by $f_{1,r,s}(x,y,\theta)=(x+r\theta,y+s\theta,\theta)$.} $f_{1,r,s}:(T^3,\xi_0)\to(T^3,\xi_0)$ determined by $f_*\bar\theta=\bar\theta-rx-sy$ for $r,s\in\ZZ$, and $Gr_{X_{f_{1,r,s}}}(A)$ denotes the sum of Taubes' Gromov invariants (modulo 2) of $X_{f_{1,r,s}}$ with respect to all lifts $\tilde A\in H_2(X_{f_{1,r,s}};\ZZ)$ of $A$ (see Theorem~\ref{thm:main}).}

\begin{remark}
We can define $Gr_L(A)$ over $\ZZ$, by equipping our ECH generators and moduli spaces of pseudoholomorphic curves with ``coherent orientations'' (see \cite{Gerig:taming}). Although we work over $\ZZ/2$ when relating $Gr_L(A)$ to Seiberg--Witten theory, we expect the arguments to work over $\ZZ$ (see \cite{Gerig:Gromov}).
\end{remark}

\begin{remark}[\textit{Relation to Gromov--Witten theory}]
Auroux brought up the following paradox. Consider $L=S^1\times S^1\subset\CC^2\subset\CC P^2$, viewed as a Lagrangian torus in $(\CC P^2,\omega_\text{std})$, and a certain moduli space of $J$-disks representing a fixed homotopy class in $\pi_2(\CC P^2,L)$. There is a wall-crossing phenomenon in which the $J$-disks disappear (see \cite{Auroux:mirrorTduality,Vianna:exoticTori}). In the ECH picture after neck-stretching along a neighborhood of the torus, presumably these disks contribute to the ECH invariants $Gr_L(A)$. But $Gr_L(A)$ should not depend on $J$, so what is going on? Perhaps it is the case that when the disks die there are other curves of higher genus which birth to compensate the discrepancy, representing the relevant class in $H_2(\CC P^2,L;\ZZ)$? It turns out that a result from \cite{Tonkonog:string} implies that wall-crossing does not occur for the collection of $J$-disks whose corresponding collection of boundary loops on the torus sum to zero on homology (strictly speaking, all of the wall-crossings cancel out).
\end{remark}

Along the way to computing our ECH invariant $Gr_L(A)$, we pass through Seiberg--Witten theory. Namely, there is an isomorphism $ECH_0(T^3,\xi_0)\cong\Hto(T^3,\fs_0)$ for a version of monopole Floer homology with respect to the unique torsion spin-c structure $\fs_0$ on $T^3$ (see Section~\ref{Floer homologies of 3-tori}), and $Gr_L(A)$ may be expressed as a suitable count of Seiberg--Witten solutions on the completion of $X_0$. Then we use a gluing formula in monopole Floer homology to compute those Seiberg--Witten counts. As this part does not care about the symplectic structure, we recover the following main result of \cite{MMS:product}.

\bigskip\noindent
\textbf{Main Result 2.} \textit{Let $X$ be a connected smooth 4-manifold with $b^2_+(X)>0$, let $L$ be a smoothly embedded 2-torus with trivial self-intersection number, and let $\fs$ be a spin-c structure on the complement $X_0$ of a tubular neighborhood of $L$ such that $\fs|_{\partial X_0}=\fs_0$. Fix a basis $\lbrace x,y,\bar\theta\rbrace$ of $H_1(T^3;\ZZ)\cong\ZZ^3$ (as specified in Section~\ref{Floer homologies of 3-tori}) and denote by $f_{p,r,s}:T^3\to T^3$ an orientation-preserving diffeomorphism determined by $f_*\bar\theta=p\bar\theta-rx-sy$ for $p,r,s\in\ZZ$. Then there is a product formula along $T^3$ for Seiberg--Witten invariants
$$SW_{X_{f_{p,r,s}}}(\fs)=p\cdot SW_X(\fs)+r\cdot SW_{X_{f_{0,1,0}}}(\fs)+s\cdot SW_{X_{f_{0,0,1}}}(\fs)$$
where in the $b^2_+(X)=1$ case the Seiberg--Witten invariants are computed in corresponding chambers (see Corollary~\ref{cor:main}). Here, $X_{f_{p,r,s}}$ is the logarithmic transformation of $X$ along $L$ using $f_{p,r,s}$, and $SW_{X_{f_{p,r,s}}}(\fs)$ denotes the sum of the Seiberg--Witten invariants of $X_{f_{p,r,s}}$ with respect to all lifts $\tilde \fs\in\Spinc(X_{f_{p,r,s}})$ of $\fs$.}

\bigskip
We end this introduction with the viewpoint that this paper is an analog of \cite{Gerig:taming,Gerig:Gromov} by replacing a symplectic form (and its Lagrangian tori) with a near-symplectic form (and its degeneracy circles), and replacing the 3-manifold $S^1\times S^1\times S^1$ with the 3-manifold $S^1\times S^2$. The near-symplectic Gromov invariants of \cite{Gerig:taming} live in $ECH_*(S^1\times S^2)$ and recover the Seiberg--Witten invariants, while the Lagrangian torus invariants of this paper live in $ECH_*(S^1\times S^1\times S^1)$ and recover triples of Seiberg--Witten invariants.

\begin{convention}
All ordinary (co)homology groups implicitly use $\ZZ$ coefficients.
\end{convention}

%%%%%%%%%%%%%%%%%%%%%%%%%%%%%%%%%%%%%%%%%%%%%%%%%%%%%%%%%%
%%%%%%%%%%%%%%%%%%%%%%%%%%%%%%%%%%%%%%%%%%%%%%%%%%%%%%%%%%
\subsection*{Acknowledgements}
The author thanks Peter Kronheimer for help with aspects of monopole Floer theory, Michael Hutchings for help with aspects of ECH, and Denis Auroux for discussions on pseudoholomorphic disks with Lagrangian boundary conditions. This material is based upon work supported by the National Science Foundation under Award \#1803136.

%%%%%%%%%%%%%%%%%%%%%%%%%%%%%%%%%%%%%%%%%%%%%%%%%%%%%%%%%%
%%%%%%%%%%%%%%%%%%%%%%%%%%%%%%%%%%%%%%%%%%%%%%%%%%%%%%%%%%
\section{Review of pseudoholomorphic curve theory}
\label{Review of pseudoholomorphic curve theory}

We briefly introduce most of the terminology and notations that appear in this paper. More details are found in \cite{Hutchings:lectures}.

%%%%%%%%%%%%%%%%%%%%%%%%%%%%%%%%%%%%%%%%%%%%%%%%%%%%%%%%%%
\subsection{Orbits}
\label{Orbits}

Let $(Y,\lambda)$ be a closed contact 3-manifold, oriented by $\lambda\wedge d\lambda>0$, and let $\xi=\ker\lambda$ be its contact structure. With respect to the \textit{Reeb vector field} $R$ determined by $d\lambda(R,\cdot)=0$ and $\lambda(R)=1$, a Reeb orbit is a map $\gamma:\RR/T\ZZ\to Y$ for some $T>0$ with $\gamma'(t)=R(\gamma(t))$, modulo reparametrization. A given Reeb orbit is \textit{nondegenerate} if the linearization of the Reeb flow around it does not have 1 as an eigenvalue, in which case the eigenvalues are either on the unit circle (such $\gamma$ are \textit{elliptic}) or on the real axis (such $\gamma$ are \textit{hyperbolic}). Assume from now on that $\lambda$ is \textit{nondegenerate}, i.e. all Reeb orbits are nondegenerate, which is a generic property. 

An \textit{orbit set} is a finite set of pairs $\Theta=\lbrace(\Theta_i,m_i)\rbrace$ where the $\Theta_i$ are distinct embedded Reeb orbits and the $m_i$ are positive integers (which may be empty). An orbit set is $\textit{admissible}$ if $m_i=1$ whenever $\Theta_i$ is hyperbolic. Its homology class is defined by
$$[\Theta]:=\sum_im_i[\Theta_i]\in H_1(Y)$$
For a given $\Gamma\in H_1(Y)$, the \textit{ECH chain complex} $ECC_*(Y,\lambda,J,\Gamma)$ is freely generated over $\ZZ/2$ by admissible orbit sets representing $\Gamma$. The differential $\partial_\text{ECH}$ will be defined momentarily.

%%%%%%%%%%%%%%%%%%%%%%%%%%%%%%%%%%%%%%%%%%%%%%%%%%%%%%%%%%
\subsection{Curves}
\label{Curves}

Given two contact manifolds $(Y_\pm,\lambda_\pm)$, possibly disconnected or empty, a \textit{strong symplectic cobordism} from $(Y_+,\lambda_+)$ to $(Y_-,\lambda_-)$ is a compact symplectic manifold $(X,\omega)$ with oriented boundary
$$\partial X=Y_+\sqcup -Y_-$$
such that $\omega|_{Y_\pm}=d\lambda_\pm$.
We can always find neighborhoods $N_\pm$ of $Y_\pm$ in $X$ diffeomorphic to $(-\e,0]\times Y_+$ and $[0,\e)\times Y_-$, such that $\omega|_{N_\pm}=d(e^{\pm s}\lambda_\pm)$ where $s$ denotes the coordinate on $(-\e,0]$. We then glue symplectization ends to $X$ to obtain the \textit{completion}
$$\overline X:=\big((-\infty,0]\times Y_-\big)\cup_{Y_-}X\cup_{Y_+}\big([0,\infty)\times Y_+\big)$$
of $X$, a noncompact symplectic 4-manifold whose symplectic form is also denoted by $\omega$. We will also use the notation $\overline X$ to denote the symplectization $\RR\times Y$ of $(Y,\lambda)$, with $\omega=d(e^s\lambda)$.

An almost complex structure $J$ on a symplectization $(\RR\times Y,d(e^s\lambda))$ is \textit{symplectization-admissible} if it is $\RR$-invariant; $J(\partial_s)=R$; and $J(\xi)\subseteq\xi$ such that $d\lambda(v,Jv)\ge 0$ for $v\in\xi$. An almost complex structure $J$ on the completion $\overline X$ is \textit{cobordism-admissible} if it is $\omega$-compatible on $X$ and agrees with symplectization-admissible almost complex structures on the ends $[0,\infty)\times Y_+$ and $(-\infty,0]\times Y_-$.

\bigskip
Given a cobordism-admissible $J$ on $\overline X$ and orbit sets $\Theta^+=\lbrace(\Theta^+_i,m^+_i)\rbrace$ in $Y_+$ and $\Theta^-=\lbrace(\Theta^-_j,m^-_j)\rbrace$ in $Y_-$, a \textit{$J$-holomorphic curve $\cC$ in $\overline X$ from $\Theta^+$ to $\Theta^-$} is defined as follows. It is a $J$-holomorphic map $\cC\to \overline X$ whose domain is a possibly disconnected punctured compact Riemann surface, defined up to composition with biholomorphisms of the domain, with positive ends of $\cC$ asymptotic to covers of $\Theta^+_i$ with total multiplicity $m^+_i$, and with negative ends of $\cC$ asymptotic to covers of $\Theta^-_j$ with total multiplicity $m^-_j$ (see \cite{Hutchings:lectures}*{\S3.1}). The moduli space of such curves is denoted by $\mM(\Theta^+,\Theta^-)$, but where two such curves are considered equivalent if they represent the same current in $\overline X$, and in the case of a symplectization $\overline X=\RR\times Y$ the equivalence includes translation of the $\RR$-coordinate. An element $\cC\in\mM(\Theta^+,\Theta^-)$ can thus be viewed as a finite set of pairs $\lbrace(C_k,d_k)\rbrace$ or formal sum $\sum d_kC_k$, where the $C_k$ are distinct irreducible somewhere-injective $J$-holomorphic curves and the $d_k$ are positive integers. 

Let $H_2(X,\Theta^+,\Theta^-)$ be the set of relative 2-chains $\Sigma$ in $X$ such that
$$\partial \Sigma=\sum_im^+_i\Theta^+_i-\sum_jm^-_j\Theta^-_j$$
modulo boundaries of 3-chains. It is an affine space over $H_2(X)$, and every curve $\cC$ defines a relative class $[\cC]\in H_2(X,\Theta^+,\Theta^-)$.

%%%%%%%%%%%%%%%%%%%%%%%%%%%%%%%%%%%%%%%%%%%%%%%%%%%%%%%%%%
\subsection{Homology}
\label{Homology}

The \textit{ECH index} $I(\cC)$ of a current $\cC\in\mM(\Theta^+,\Theta^-)$ is an integer depending only on its relative class in $H_2(X,\Theta^+,\Theta^-)$, and is the local expected dimension of this moduli space of $J$-holomorphic currents (see \cite{Hutchings:lectures}*{\S3}). Denote by $\mM_I(\Theta^+,\Theta^-)$ the subset of elements in $\mM(\Theta^+,\Theta^-)$ that have ECH index $I$. 

Given admissible orbit sets $\Theta^\pm$ of $(Y,\lambda)$, the coefficient $\langle\partial_\text{ECH}\Theta^+,\Theta^-\rangle\in\ZZ/2$ is the count (modulo 2) of elements in $\mM_1(\Theta^+,\Theta^-)$ on the symplectization $\overline X=\RR\times Y$. If $J$ is generic then $\partial_\text{ECH}$ is well-defined and $\partial^2_\text{ECH}=0$. The resulting homology is independent of the choice of $J$, depends only on $\xi$ and $\Gamma$, and is denoted by $ECH_*(Y,\xi,\Gamma)$.

The total sum
$$ECH_*(Y,\xi):=\bigoplus_{\Gamma\in H_1(Y)}ECH_*(Y,\xi,\Gamma)$$
has an absolute grading by homotopy classes of oriented 2-plane fields on $Y$ (see \cite{Hutchings:revisited}*{\S 3}), the set of which is denoted by $J(Y)$, and there is a transitive $\ZZ$-action on $J(Y)$. With that said, for the cases relevant to this paper, $ECH_*(Y,\xi,\Gamma)$ has a relative $\ZZ$ grading which is refined by the absolute grading and satisfies
$$|\Theta^+|-|\Theta^-|= I(\cC)$$
for any $\cC\in \mM(\Theta^+,\Theta^-)$.

\bigskip
Given a closed minimal symplectic 4-manifold $(X,\omega)$ so that $Y_\pm=\varnothing$, a class $A\in H_2(X)$, a collection $\bar z\subset X$ of $\frac12I(A)$ distinct points, and a generic $\omega$-compatible $J$ on $X$, let $\mM_{I(A)}(\varnothing,\varnothing;A,\bar z)$ denote the subset of elements in $\mM_{I(A)}(\varnothing,\varnothing)$ on $X$ which represent $A$ and intersect all points $\bar z$. Then the \textit{Gromov invariant} $Gr_{X,\omega}(A)$ is a ``suitably'' weighted count (modulo 2) of elements in $\mM_{I(A)}(\varnothing,\varnothing;A,\bar z)$, where the quantifier ``suitably'' will not be clarified here but is found in \cite{Taubes:counting}*{\S2}. The resulting integer is independent of the choice of $(J,\bar z)$. For example, $Gr_{X,\omega}(0)=1$.

%%%%%%%%%%%%%%%%%%%%%%%%%%%%%%%%%%%%%%%%%%%%%%%%%%%%%%%%%%
\subsection{L-flat approximations}
\label{L-flat approximations}

The symplectic action of an orbit set $\Theta=\lbrace(\Theta_i,m_i)\rbrace$ is defined by
$$\aA(\Theta):=\sum_im_i\int_{\Theta_i}\lambda$$
The symplectic action induces a filtration on the ECH chain complex. For a positive real number $L$, the $L$-\textit{filtered ECH} is the homology of the subcomplex $ECC_*^L(Y,\lambda,J,\Gamma)$ spanned by admissible orbit sets of action less than $L$. The ordinary ECH is recovered by taking the direct limit over $L$, via maps induced by inclusions of the filtered chain complexes.

For a fixed $L>0$ it is convenient (and possible) to modify $\lambda$ and $J$ on small tubular neighborhoods of all Reeb orbits of action less than $L$, in order to relate $J$-holomorphic curves to Seiberg--Witten theory most easily. The desired modifications of $(\lambda,J)$ are called \textit{L-flat approximations}, and were introduced by Taubes in \cite{Taubes:ECH=SWF1}*{Appendix}. They induce isomorphisms on the $L$-filtered ECH chain complex, and the key fact here is that $L$-flat orbit sets are in bijection with Seiberg--Witten solutions of ``energy'' less than $2\pi L$ (see Section~\ref{Taubes' isomorphisms}).

%%%%%%%%%%%%%%%%%%%%%%%%%%%%%%%%%%%%%%%%%%%%%%%%%%%%%%%%%%
%%%%%%%%%%%%%%%%%%%%%%%%%%%%%%%%%%%%%%%%%%%%%%%%%%%%%%%%%%
\section{Review of gauge theory}
\label{Review of gauge theory}

We briefly introduce most of the terminology and notations that appear in this paper. More details are found in \cite{KM:book, HT:Arnold2}.

%%%%%%%%%%%%%%%%%%%%%%%%%%%%%%%%%%%%%%%%%%%%%%%%%%%%%%%%%%
\subsection{Contact 3-manifolds}
\label{Contact 3-manifolds}

Let $(Y,\lambda)$ be a closed oriented connected contact 3-manifold, and choose an almost complex structure $J$ on $\xi$ that induces a symplectization-admissible almost complex structure on $\RR\times Y$. There is a compatible metric $g$ on $Y$ such that $|\lambda|=1$ and $*\lambda=\frac12d\lambda$, with $g(v,w)=\frac12d\lambda(v,Jw)$ for $v,w\in\xi$.

View a spin-c structure $\fs\in\Spinc(Y)$ on $Y$ as an isomorphism class of a pair $(\SS,\cl)$ consisting of a rank 2 Hermitian vector bundle $\SS\to Y$ (the \textit{spinor bundle}) and Clifford multiplication $\cl:TY\to\End(\SS)$. The contact structure $\xi$ (and more generally, any oriented 2-plane field on $Y$) picks out a canonical spin-c structure $\fs_\xi=(\SS_\xi,\cl)$ with $\SS_\xi=\underline{\CC}\oplus\xi$, where $\underline{\CC}\to Y$ denotes the trivial line bundle, and $\cl$ is defined as follows. Given an oriented orthonormal frame $\lbrace e_1,e_2,e_3\rbrace$ for $T_yY$ such that $\lbrace e_2,e_3\rbrace$ is an oriented orthonormal frame for $\xi_y$, then in terms of the basis $(1,e_2)$ for $\SS_\xi$,
$$\cl(e_1)=\bigl( \begin{smallmatrix}
i&0\\ 0&-i
\end{smallmatrix} \bigr),\indent \cl(e_2)=\bigl( \begin{smallmatrix}
0&-1\\ 1&0
\end{smallmatrix} \bigr),\indent \cl(e_3)=\bigl( \begin{smallmatrix}
0&i\\ i&0
\end{smallmatrix} \bigr)$$
There is then a canonical isomorphism
$$H^2(Y)\to\Spinc(Y)$$
where the 0 class corresponds to $\fs_\xi$. Specifically, there is a canonical decomposition $\SS=E\oplus \xi E$ into $\pm i$ eigenbundles of $\cl(\lambda)$, where $E\to Y$ is the complex line bundle corresponding to a given class in $H^2(Y)$.

A \textit{spin-c connection} is a connection $\bA$ on $\SS$ which is compatible with Clifford multiplication in the sense that
$$\nabla_\bA(\cl(v)\psi)=\cl(\nabla v)\psi+\cl(v)\nabla_\bA\psi$$
where $\nabla v$ denotes the covariant derivative of $v\in TY$ with respect to the Levi-Civita connection. Such a connection is equivalent to a Hermitian connection (also denoted by $\bA$) on $\det\SS$, and determines a \textit{Dirac operator}
$$D_\bA:\Gamma(\SS)\stackrel{\nabla_\bA}{\longrightarrow}\Gamma(T^* Y\otimes\SS)\stackrel{\cl}{\longrightarrow}\Gamma(\SS)$$
With respect to the decomposition $\SS=E\oplus\xi E$, there is a unique connection $A_\xi$ on $\xi$ such that its Dirac operator kills the section $(1,0)\in\Gamma(\SS_\xi)$, and there is a canonical decomposition
$$\bA=A_\xi+2A$$
on $\det\SS=\xi E^2$ with Hermitian connection $A$ on $E$. The gauge group $C^\infty(Y,S^1)$ acts on a given pair $(A,\psi)$ by
$$u\cdot(A,\psi)=(A-u^{-1}du,u\psi)$$
In this paper, a \textit{configuration} $\fc$ refers to a gauge-equivalence class of such a pair.

Fix a suitably generic exact 2-form $\mu\in\Omega^2(Y)$ as described in \cite{HT:Arnold2}*{\S 2.2}, and a positive real number $r\in\RR$. A configuration $\fc$ solves \textit{Taubes' perturbed Seiberg--Witten equations} when
\begin{equation}
\label{SW3}
D_\bA\psi=0,\indent\indent* F_A=r(\tau(\psi)-i\lambda)-\frac12* F_{A_\xi}+i*\mu
\end{equation}
where $F_{A_\xi}$ is the curvature of $A_\xi$ and $\tau:\SS\to iT^* Y$ is the quadratic bundle map
$$\tau(\psi)(\cdot)=\langle\cl(\cdot)\psi,\psi\rangle$$
An appropriate change of variables recovers the usual Seiberg--Witten equations (with perturbations) that appear in \cite{KM:book}. 

\begin{remark}
We have suppressed additional ``abstract tame perturbations'' to these equations required to obtain transversality of the moduli spaces of its solutions (see \cite{KM:book}*{\S 10}), because they do not interfere with the analysis presented in this paper. This is further clarified in \cite{HT:Arnold2}*{\S 2.1} and \cite{Taubes:ECH=SWF1}*{\S3.h Part 5}, where the same suppression occurs.
\end{remark}

Denote by $\fM(Y,\fs)$ the set of solutions to~\eqref{SW3}, called \textit{(SW) monopoles}. A solution is \textit{reducible} if its $\Gamma(\SS)$-component vanishes, and is otherwise \textit{irreducible}. The monopoles freely generate the monopole Floer chain complex $\Cfrom^*(Y,\lambda,\fs,J,r)$. The chain complex differential will not be reviewed here. Denote by $\Cfrom_L^*(Y,\lambda,\fs,J,r)$ the submodule generated by irreducible monopoles $\fc$ with energy
$$E(\fc):=i\int_Y\lambda\wedge F_A<2\pi L$$
When $r$ is sufficiently large, $\Cfrom_L^*(Y,\lambda,\fs,J,r)$ is a subcomplex of $\Cfrom^*(Y,\lambda,\fs,J,r)$ and its homology $\Hfrom_L^*(Y,\lambda,\fs,J,r)$ is well-defined and independent of $r$ and $\mu$. Taking the direct limit over $L>0$, we recover the ordinary $\Hfrom^*(Y,\fs)$ in \cite{KM:book} which is independent of $\lambda$ and $J$. It is sometimes convenient to consider the group
$$\Hfrom^*(Y):=\bigoplus_{\fs\in\Spinc(Y)}\Hfrom^*(Y,\fs)$$
over all spin-c structures at once.

%%%%%%%%%%%%%%%%%%%%%%%%%%%%%%%%%%%%%%%%%%%%%%%%%%%%%%%%%%
\subsection{Symplectic cobordisms}
\label{Symplectic cobordisms}

Let $(X,\omega)$ be a strong symplectic cobordism between (possibly disconnected or empty) closed oriented contact 3-manifolds $(Y_\pm,\lambda_\pm)$. Let $\widehat\omega$ denote the particular symplectic form, as specified in \cite{HT:Arnold2}*{\S4.2}, that extends $\omega$ over the cylindrical completion $\overline X$. Choose a cobordism-admissible almost complex structure $J$ on $(\overline X,\widehat\omega)$.

The 4-dimensional gauge-theoretic scenario is analogous to the 3-dimensional scenario. View a spin-c structure $\fs$ on $X$ as an isomorphism class of a pair $(\SS,\cl)$ consisting of a Hermitian vector bundle $\SS=\SS_+\oplus\SS_-$, where the spinor bundles $\SS_\pm$ have rank 2, and Clifford multiplication $\cl:TX\to\End(\SS)$ such that $\cl(v)$ exchanges $\SS_+$ and $\SS_-$ for each $v\in TX$. The set $\Spinc(X)$ of spin-c structures is an affine space over $H^2(X)$, and we denote by $c_1(\fs)$ the first Chern class of $\det\SS_+=\det\SS_-$. A spin-c connection on $\SS$ is equivalent to a Hermitian connection $\bA$ on $\det\SS_+$ and defines a Dirac operator $D_\bA:\Gamma(\SS_\pm)\to\Gamma(\SS_\mp)$.

A spin-c structure $\fs$ on $X$ restricts to a spin-c structure $\fs|_{Y_\pm}$ on $Y_\pm$ with spinor bundle $\SS_{Y_\pm}:=\SS_+|_{Y_\pm}$ and Clifford multiplication $\cl_{Y_\pm}(\cdot):=\cl(v)^{-1}\cl(\cdot)$, where $v$ denotes the outward-pointing unit normal vector to $Y_+$ and the inward-pointing unit normal vector to $Y_-$. There is a canonical way to extend $\fs$ over $\overline X$, and the resulting spin-c structure is also denoted by $\fs$. There is a canonical decomposition $\SS_+=E\oplus K^{-1}E$ into $\mp2i$ eigenbundles of $\cl_+(\widehat\omega)$, where $K$ is the canonical bundle of $(\overline X,J)$ and $\cl_+:\bigwedge^2_+T^*\overline X\to\End(\SS_+)$ is the projection of Clifford multiplication onto $\End(\SS_+)$. This agrees with the decomposition of $\SS_{Y_\pm}$ on the ends of $\overline X$.

The symplectic form $\omega$ picks out the canonical spin-c structure $\fs_\omega=(\SS_\omega,\cl)$, namely that for which $E$ is trivial, and the $H^2(X)$-action on $\Spinc(X)$ becomes a canonical isomorphism. There is a unique connection $A_{K^{-1}}$ on $K^{-1}$ such that its Dirac operator annihilates the section $(1,0)\in\Gamma((\SS_\omega)_+)$, and we henceforth identify a spin-c connection with a Hermitian connection $A$ on $E$.

In this paper, a \textit{configuration} $\fd$ refers to a gauge-equivalence class of a pair $(\bA,\Psi)$ under the gauge group $C^\infty(X,S^1)$-action. A connection $\bA$ on $\det\SS_+$ is in \textit{temporal gauge} on the ends of $\overline X$ if
$$\nabla_\bA=\frac{\partial}{\partial s}+\nabla_{\bA(s)}$$
on $(-\infty,0]\times Y_-$ and $Y_+\times[0,\infty)$, where $\bA(s)$ is a connection on $\det\SS_{Y_\pm}$ depending on $s$. Connections are placed into temporal gauge by an appropriate gauge transformation.

Fix suitably generic exact 2-forms $\mu_\pm\in\Omega^2(Y_\pm)$ as in Section~\ref{Contact 3-manifolds}, a suitably generic exact 2-form $\mu\in\Omega^2(\overline X)$ that agrees with $\mu_\pm$ on the ends of $\overline X$ (with $\mu_*$ denoting its self-dual part), and a positive real number $r\in\RR$. \textit{Taubes' perturbed Seiberg--Witten equations} for a configuration $\fd$ are
\begin{equation}
\label{SW4}
D_\bA\Psi=0,\;\;F^+_A=\frac{r}{2}(\rho(\Psi)-i\widehat\omega)-\frac12F^+_{A_{K^{-1}}}+i\mu_*
\end{equation}
where $F_A^+$ is the self-dual part of the curvature of $A$ and $\rho:\SS_+\to\bigwedge^2_+T^* X$ is the quadratic bundle map
$$\rho(\Psi)(\cdot,\cdot)=-\frac12\big\langle[\cl(\cdot),\cl(\cdot)]\Psi,\Psi\big\rangle$$
Similarly to the 3-dimensional equations, there are additional ``abstract tame perturbations'' which have been suppressed (see \cite{KM:book}*{\S24.1}). Given monopoles $\fc_\pm$ on $Y_\pm$, denote by $\fM(\fc_-,X,\fc_+;\fs)$ the set of solutions to~\eqref{SW4} which are asymptotic to $\fc_\pm$ (in temporal gauge on the ends of $\overline X$), called \textit{SW solutions}.

Similarly to ECH, an ``index'' is associated with each SW solution, namely the local expected dimension of the moduli space of SW solutions. Denote by $\fM_k(\fc_-,X,\fc_+;\fs)$ the subset of elements in $\fM(\fc_-,X,\fc_+;\fs)$ that have index $k$.

%%%%%%%%%%%%%%%%%%%%%%%%%%%%%%%%%%%%%%%%%%%%%%%%%%%%%%%%%%
\subsection{Kronheimer--Mrowka's formalism}
\label{Kronheimer--Mrowka's formalism}

The previous sections concerned the setup of Seiberg--Witten theory from the point of view of symplectic geometry, using Taubes' large perturbations. We now briefly review some relevant aspects of Seiberg--Witten theory from the point of view of Kronheimer--Mrowka's monopole Floer homology.

Let $\bB(Y,\fs)$ denote the space of configurations $[\bA,\psi]$. Since we are not taking large perturbations to the Seiberg--Witten equations, we have to deal with the reducible locus $\bB^\text{red}(Y,\fs)$ which prevents $\bB(Y,\fs)$ from being a Banach manifold. This is done by forming the \textit{blow-up} $\bB^\sigma(Y,\fs)$, the space of configurations $[\bA,s,\psi]$ such that $s\in\RR^{\ge0}$ and $\|\psi\|_2=1$, equipped with the blow-down map
$$\bB^\sigma(Y,\fs)\to\bB(Y,\fs),\indent[\bA,s,\psi]\mapsto[\bA,s\psi]$$
This is a Banach manifold whose boundary $\partial\bB^\sigma(Y,\fs)$ consists of reducible configurations (where $s=0$). The same setup applies to the case that $X$ is a closed 4-manifold. The integral cohomology ring $H^*(\bB^\sigma(M,\fs))$, for $M$ either $Y$ or $X$, is isomorphic to the graded algebra $\big(\Lambda^* H_1(M)/\text{Torsion}\big)\otimes\ZZ[U]$, where $U$ is a 2-dimensional generator (see \cite{KM:book}*{Proposition 9.7.1}).

We can construct a certain vector field $\vV^\sigma$ on $\bB^\sigma(Y,\fs)$ using the pull-back of the gradient of the Chern-Simons-Dirac functional $\lL_\text{CSD}:\bB(Y,\fs)\to\RR$ (see \cite{KM:book}*{\S4.1}). Strictly speaking, the Chern-Simons-Dirac functional is not well-defined on $\bB(Y,\fs)$ unless $c_1(\fs)$ is torsion, but such spin-c structures are the only ones relevant to this paper. Likewise, the perturbed gradient $\grad\lL_\text{CSD}+\fq$ gives rise to a vector field $\vV^\sigma+\fq^\sigma$, where $\fq$ is an ``abstract tame perturbation'' (see \cite{KM:book}*{\S 10}). We always assume that $\fq$ is chosen from a Banach space of tame perturbations so that all stationary points of $\vV^\sigma+\fq^\sigma$ are nondegenerate.

The critical points (i.e. stationary points) of $\vV^\sigma+\fq^\sigma$ are either \textit{irreducibles} of the form $[\bA,s,\psi]$ with $s>0$ and $[\bA,s\psi]\in\crit(\grad\lL_\text{CSD}+\fq)$, or \textit{reducibles} of the form $[\bA,0,\psi]$ with $\psi$ an eigenvector of $D_\bA$. We can package these critical points together\footnote{To define the Floer groups over $\ZZ$ we must also equip the critical points with choices of ``coherent orientations''.} in various ways to form the monople Floer (co)homologies, such as $\Hto_*(Y,\fs)$ and $\Hfrom^*(Y,\fs)$. The differentials will not be reviewed here, but we do assume in this paper that all perturbations $\fq$ are chosen so that the differentials are well-defined.

\begin{remark}
If $\fq$ is one of Taubes' sufficiently large perturbations associated with a contact form (given in Section~\ref{Contact 3-manifolds}), then the image of the set of critical points under the blow-down map is $\fM(Y,\fs)$. In fact, we no longer need to use the blow-up model.
\end{remark}

Let $X$ either be a closed 4-manifold or have boundary $Y$. There is a partially-defined restriction map $r:\bB^\sigma(X,\fs)\dashrightarrow\bB^\sigma(Y,\fs)$ whose domain consists of those configurations $[\bA,s,\Psi]$ satisfying $\Psi_Y:=\Psi|_Y\ne0$, such that
$$r([\bA,s,\Psi])=\Big[\bA|_Y,s\|\Psi_Y\|_2,\Psi_Y/\|\Psi_Y\|_2\Big]$$
If $X=[0,1]\times Y$ then there is a family of restriction maps $r_t:\bB^\sigma(X,\fs)\dashrightarrow\bB^\sigma(Y,\fs)$ for $t\in[0,1]$.

With respect to a cylindrical completion $\overline X$ of $X$, the unperturbed Seiberg--Witten equations on $\bB(\overline X,\fs)$ take the form
\begin{equation}
\label{SWblowup}
D_\bA\Psi=0,\;\;F^+_\bA=s^2\frac{1}{4}\rho(\Psi)
\end{equation} 
on $\bB^\sigma(\overline X,\fs)$. In the cylindrical case $\overline X=\RR\times Y$ with spin-c structure induced from $\fs$ on $Y$ and cylindrical perturbation $\fp$, any solution $\fd$ to the $\fp^\sigma$-perturbed version of~\eqref{SWblowup} on $\RR\times Y$ determines a path
$$\check\fd(t):=r_t(\fd)\in\bB^\sigma(Y,\fs)$$
because there is a unique continuation theorem which ensures that $r_t$ is defined on each slice $\fd|_{\lbrace t\rbrace\times Y}$ (see \cite{KM:book}*{\S10.8}).

In the general case of a cobordism $(X,\fs):(Y_+,\fs_+)\to(Y_-,\fs_-)$, we fix abstract perturbations $\fq_\pm$ on $Y_\pm$ and extend them to a suitable abstract perturbation $\fp$ on $\overline X$. Given critical points $\fc_\pm$ over $(Y_\pm,\fs_\pm)$, we denote by $M(\fc_-,\fc_+;\fs)$ the subset of $\fp^\sigma$-perturbed Seiberg--Witten solutions $\fd\in\bB^\sigma(\overline X,\fs)$ for which $\check\fd$ (on the ends of $\overline X$) is asymptotic to $\fc_\pm$ as $t\to\pm\infty$. Depending on the context, we may alternatively write $M(\fc_-,X,\fc_+;\fs)$ to make the manifold explicit. Note that $M(\fc_-,X,\fc_+;\fs)=\fM(\fc_-,X,\fc_+;\fs)$ when using Taubes' perturbations in Section~\ref{Symplectic cobordisms}.

%%%%%%%%%%%%%%%%%%%%%%%%%%%%%%%%%%%%%%%%%%%%%%%%%%%%%%%%%%
\subsection{Closed 4-manifolds}
\label{Closed 4-manifolds}

The case $Y_\pm=\varnothing$ recovers Seiberg--Witten theory on closed oriented 4-manifolds $(X,g)$. For a generic choice of abstract perturbation $\mu\in\Omega^2_+(X)$ to~\eqref{SWblowup}, denote the space of solutions by $\fM(\fs)$. When $b^2_+(X)>0$, a generic choice of $\mu$ makes $\fM(\fs)$ a finite-dimensional compact orientable smooth manifold, where the orientation is determined by a \textit{homology orientation} of $X$, this being an orientation of $\det H^1(X;\RR)\otimes\det H^2_+(X;\RR)$ (see also Appendix~\ref{appendix}).

The dimension of $\fM(\fs)$ is equal to the integer
$$d(\fs):=\frac{1}{4}\left(c_1(\fs)^2-2\chi(X)-3\sigma(X)\right)$$
where $c_1(\fs)$ denotes the first Chern class of the spin-c structure's positive spinor bundle, $\chi(X)$ denotes the Euler characteristic of $X$, and $\sigma(X)$ denotes the signature of $X$. If $\dim\fM(\fs)<0$, then $\fM(\fs)$ is empty and the \textit{Seiberg--Witten invariant} is defined to be zero. In the remaining cases, the moduli space gives a well-defined element $[\fM(\fs)]\in H_*(\bB^\sigma(X,\fs))$. 

\begin{definition}
\label{defn:SW}
For a given choice of homology orientation of $X$, the \textit{Seiberg--Witten invariant} $SW_X(\fs)\in\ZZ$ is defined as follows. When $d(\fs)\ge0$ is even its value is
$$SW_X(\fs):=\left\langle U^{d(\fs)/2},[\fM(\fs)]\right\rangle\in\ZZ$$
and it is defined to be zero if $d(\fs)$ is odd.
\end{definition}

%%%%%%%%%%%%%%%%%%%%
\subsubsection{Choice of ``chamber''}

When $b^2_+(X)>1$, the value of the Seiberg--Witten invariant is a diffeomorphism invariant of $X$ independent of the choice of generic pairs $(g,\mu)\in\operatorname{Met}(X)\times\Omega^2_+(X)$, where $\operatorname{Met}(X)$ denotes the Fr\'echet space of smooth Riemannian metrics on $X$. When $b^2_+(X)=1$, there is a ``wall-crossing phenomenon'' as follows. Denote by $\omega_g$ the unique (up to scalar multiplication) nontrivial self-dual harmonic 2-form with respect to $g$. The set of pairs $(g,\mu)$ satisfying the constraint
\begin{equation}
\label{chamber}
2\pi[\omega_g]\cdot c_1(\fs)+\int_X\omega_g\wedge\mu=0
\end{equation}
defines a ``wall'' which separates $\operatorname{Met}(X)\times\Omega^2_+(X)$ into two open sets, called \textit{$c_1(\fs)$-chambers}. The Seiberg--Witten invariant is constant on any $c_1(\fs)$-chamber, and the difference between chambers is computable.

A symplectic form $\omega$ on $X$ picks out a canonical $c_1(\fs)$-chamber, namely those pairs $(g,\mu)$ for which the left hand side of~\eqref{chamber} is negative. This is the chamber that pertains to the large $r$ version of Taubes' perturbed Seiberg--Witten equations~\eqref{SW4}.

%%%%%%%%%%%%%%%%%%%%%%%%%%%%%%%%%%%%%%%%%%%%%%%%%%%%%%%%%%
\subsection{Gradings}
\label{Gradings}

The groups $\Hfrom^*(Y)$ and $\Hto_*(Y)$ have an absolute grading by homotopy classes of oriented 2-plane fields on $Y$ (see \cite{KM:book}*{\S 28} and \cite{Hutchings:revisited}*{\S 3}), the set of which is denoted by $J(Y)$, and there is a transitive $\ZZ$-action on $J(Y)$. This grading of a critical point $\fc$ is denoted by $|\fc|\in J(Y)$. It is useful to write out the induced relative $\ZZ$ grading when $\fs$ is torsion (the cases relevant to this paper), as follows. Given critical points $\fc_\pm$ over $(Y,\fs)$, each trajectory $\fd\in M(\fc_-,\fc_+;\fs)$ over $\RR\times Y$ has a Fredholm operator $Q_\fd$ which, roughly speaking, is the linearization of the perturbed version of~\eqref{SWblowup} and the gauge group action (see \cite{KM:book}*{\S14.4}). The \textit{relative grading} $\gr(\fc_-,\fc_+)$ between $\fc_-$ and $\fc_+$ is defined to be the Fredholm index of $Q_\fd$ for any $\fd\in M(\fc_-,\fc_+;\fs)$, and
$$|\fc_+|-|\fc_-|=\gr(\fc_-,\fc_+)$$
as expected. The fact that this index does not depend on the choice of $\fd$ (for $\fs$ torsion) follows immediately from \cite{KM:book}*{Proposition 14.4.5, Lemma 14.4.6}.

As explained in \cite{KM:book}*{\S22.5}, after choosing an orientation of the vector space $H^1(Y;\RR)$ there is a canonical isomorphism
$$\Hfrom^j(Y)\cong\Hto_{-j}(-Y)$$
for $j\in J(Y)$, noting that an oriented 2-plane field on $Y$ is also an oriented 2-plane field on $-Y$.

%%%%%%%%%%%%%%%%%%%%%%%%%%%%%%%%%%%%%%%%%%%%%%%%%%%%%%%%%%
%%%%%%%%%%%%%%%%%%%%%%%%%%%%%%%%%%%%%%%%%%%%%%%%%%%%%%%%%%
\section{Taubes' isomorphisms}
\label{Taubes' isomorphisms}

With $\ZZ/2$ coefficients, there is a canonical isomorphism of relatively graded modules
\begin{equation}
\label{eqn:Taubes}
ECH_*(Y,\xi,\Gamma)\cong\Hfrom^{-*}(Y,\fs_\xi+\PD(\Gamma))
\end{equation}
which also preserves the absolute gradings by homotopy classes of oriented 2-plane fields. This isomorphism is constructed on the $L$-filtered chain level. 

\begin{theorem}[\cite{Taubes:ECH=SWF1}*{Theorem 4.2}]
\label{thm:generators}
Fix $L>0$ and a generic $L$-flat pair $(\lambda,J)$ on $(Y,\xi)$. Then for $r$ sufficiently large and $\Gamma\in H_1(Y)$, there is a canonical bijection from the set of generators of $ECC^L_*(Y,\lambda,\Gamma,J)$ to the set of generators of $\Cfrom_L^*(Y,\lambda,\fs_\xi+\PD(\Gamma),J,r)$.

The image of an admissible orbit set $\Theta$ under this bijection will be denoted by $\fc_\Theta$, and is an irreducible SW monopole that solves Taubes' perturbed Seiberg--Witten equations~\eqref{SW3}.
\end{theorem}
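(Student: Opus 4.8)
The plan is to exploit Taubes' large-$r$ analysis of the equations~\eqref{SW3}, in which solutions localize along Reeb orbits, and to match this localization data with the orbit-set data of an ECH generator. Write the spinor as $\psi=(\alpha,\beta)$ under the decomposition $\SS=E\oplus\xi E$ of Section~\ref{Contact 3-manifolds}, where $E$ is the line bundle with $c_1(E)=\PD(\Gamma)$, so that $\alpha\in\Gamma(E)$ and $\beta\in\Gamma(\xi E)$. The curvature equation in~\eqref{SW3} contains the term $r(\tau(\psi)-i\lambda)$, and the mechanism of the large-$r$ limit is that this term forces $|\alpha|\to1$ pointwise away from a thin neighborhood of a union of embedded Reeb orbits, while $\alpha$ vanishes along those orbits; away from the orbits the configuration resembles the vacuum $(A_\xi,(\sqrt{r},0))$. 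First I would establish Taubes' a priori pointwise bounds — the estimate $|\alpha|\le1+O(r^{-1})$ together with exponential decay of $|\beta|$ and of $1-|\alpha|$ away from $\alpha^{-1}(0)$ — which force the zero set $\alpha^{-1}(0)$ to be a disjoint union of loops that are $C^1$-close to genuine Reeb orbits, each component carrying a transverse vortex multiplicity. Poincar\'e duality identifies the homology class of this orbit set with $\Gamma$.

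For the construction of $\fc_\Theta$ from an admissible orbit set $\Theta=\lbrace(\Theta_i,m_i)\rbrace$ I would build an approximate solution by patching. The $L$-flat hypothesis on $(\lambda,J)$ is precisely what makes this clean: on a tubular neighborhood of each embedded orbit $\Theta_i$ the metric and contact form are put into a standard flat normal form, so that~\eqref{SW3} restricts, in the directions transverse to the orbit, to the abelian vortex equations on $\CC$. I would place the rotationally symmetric degree-$m_i$ vortex solution on the normal disk to $\Theta_i$, glue these in using the vacuum $(A_\xi,(\sqrt{r},0))$ on the complement, and correct to an honest solution of~\eqref{SW3} by a contraction-mapping (Newton-iteration) argument. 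The admissibility condition ($m_i=1$ for hyperbolic $\Theta_i$) enters here: a multiplicity-greater-than-one vortex perturbs to a nondegenerate monopole only in the elliptic case, so hyperbolic orbits with higher multiplicity yield no solution, exactly matching their exclusion from both chain complexes.

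The converse — that every monopole of energy less than $2\pi L$ in $\fs_\xi+\PD(\Gamma)$ arises as a unique $\fc_\Theta$ — is where I expect the principal difficulty. The localization estimates assign to each solution a well-defined orbit set $\Theta$ (the limiting zero locus of $\alpha$ with its transverse multiplicities), with $[\Theta]=\Gamma$ and the energy and action filtrations corresponding via $E(\fc_\Theta)\approx2\pi\,\aA(\Theta)$. Bijectivity then reduces to a uniqueness statement: near each patched approximate solution there is exactly one exact solution, and no exact solution lies far from all of them. This demands sharp control of the linearized operator — a spectral-gap estimate showing that its smallest eigenvalue is bounded below uniformly in $r$ — so that the contraction mapping has a unique fixed point and the iteration cannot drift to a different orbit configuration. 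This uniform invertibility of the linearization, together with ruling out spurious solutions having no nearby orbit set, is the technical heart of the argument and the step I anticipate as the main obstacle.

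Finally, irreducibility is automatic, since the constructed solutions have $\alpha\not\equiv0$ (indeed $|\alpha|$ is close to $1$ off the orbits), so the spinor does not vanish identically. Nondegeneracy of $\fc_\Theta$ as a critical point of $\grad\lL_\text{CSD}+\fq$ follows from nondegeneracy of the underlying Reeb orbits: in the large-$r$ limit the linearization at $\fc_\Theta$ decouples into the linearized vortex operator, which is invertible, and the linearized Poincar\'e return map of the Reeb flow, which is invertible by nondegeneracy. Hence $\fc_\Theta$ is a legitimate generator of $\Cfrom_L^*(Y,\lambda,\fs_\xi+\PD(\Gamma),J,r)$, completing the bijection.
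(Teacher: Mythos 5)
This theorem is imported verbatim from Taubes (\cite{Taubes:ECH=SWF1}*{Theorem 4.2}); the paper gives no proof of its own, so there is no in-paper argument to compare against. Your outline faithfully reproduces the strategy of Taubes' actual proof: the spinor splitting $\psi=(\alpha,\beta)$ relative to $\SS=E\oplus\xi E$, the large-$r$ localization of $\alpha^{-1}(0)$ onto Reeb orbits with $[\alpha^{-1}(0)]=\PD(c_1(E))=\Gamma$, the role of $L$-flatness in reducing the transverse problem to the vortex equations, the gluing/contraction construction of $\fc_\Theta$, the energy--action correspondence $E(\fc_\Theta)\approx2\pi\aA(\Theta)$, and the spectral-gap uniqueness argument that you correctly identify as the technical heart. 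As a proposal this is the right roadmap, with the honest caveat that the uniform invertibility of the linearization and the exclusion of spurious solutions --- the steps you flag --- are precisely where the bulk of Taubes' several-hundred-page analysis lives, so nothing here should be mistaken for a self-contained proof.
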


There is a distinguished element in both $ECH_0(Y,\xi_0,0)$ and $\Hto_0(-Y,\fs_\xi)$, the \textit{contact invariant} $[\varnothing]$ induced by $\xi_0$, and Taubes' isomorphism preserves it \cite{Taubes:ECH=SWF5}*{Theorem 1.1}. On the chain level the correspondence is $\varnothing\mapsto\fc_\varnothing$.

Likewise, for closed 4-manifolds Taubes constructed the following well-known equivalence of integers, which will be assumed throughout the paper.

\begin{theorem}[\cite{Taubes:Gr=SW}]
For a closed minimal symplectic manifold $(X,\omega)$ and $A\in H_2(X)$,
$$Gr_{X,\omega}(A)=SW_X(\fs_\omega+A)\in\ZZ/2$$
where $\omega$ determines the chamber for defining the Seiberg--Witten invariants when $b^2_+(X)=1$.
\end{theorem}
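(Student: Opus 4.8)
The plan is to follow Taubes' program relating the Seiberg--Witten equations at large perturbation parameter $r$ to pseudoholomorphic curves. Recall from Section~\ref{Symplectic cobordisms} that for $\fs_\omega+A$ we have $\SS_+=E\oplus K^{-1}E$ with $E$ corresponding to $A$, and write the positive spinor as $\Psi=(\alpha,\beta)$ with $\alpha\in\Gamma(E)$ and $\beta\in\Gamma(K^{-1}E)$. Taking $r\to\infty$ in~\eqref{SW4} with a generic perturbation, the idea is that solutions localize along the zero set of $\alpha$, which converges to a $J$-holomorphic representative of $A$, and conversely every such curve gives rise to SW solutions for large $r$; matching the resulting weighted counts yields the identity. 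Since we need only the $\ZZ/2$ statement, orientation bookkeeping can be ignored, but the weighting of multiply covered tori cannot.

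First I would establish the a priori estimates. A Weitzenb\"ock argument applied to the Dirac equation together with the curvature equation gives the pointwise bound $|\alpha|\le 1+cr^{-1}$, together with $|\beta|^2=O(r^{-1})$ and exponential decay of $1-|\alpha|$ away from $\alpha^{-1}(0)$; the curvature $F_A$ concentrates, with total integral pairing controlled by $A$, in an $O(r^{-1/2})$ neighborhood of the zero set. These estimates show that as $r\to\infty$ a sequence of solutions has $\alpha^{-1}(0)$ converging as a current to an effective $J$-holomorphic divisor $C$ in class $A$, whose components are the somewhere-injective curves underlying the limit. This produces a map from the large-$r$ SW moduli space to the space of holomorphic currents counted by $Gr_{X,\omega}(A)$.

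Next I would reverse the construction by gluing. Transverse to a somewhere-injective $J$-holomorphic curve the model of a solution is the abelian vortex on the normal line bundle, so one assembles an approximate solution from the vortex cross-section and corrects it to an exact one by a contraction-mapping argument, provided the linearization is surjective (generic for embedded curves). I would check that the expected dimension $d(\fs_\omega+A)=\tfrac14(c_1(\fs_\omega+A)^2-2\chi-3\sigma)$ agrees, via the adjunction formula, with the dimension of the Gromov moduli space of curves in class $A$, so that index-$0$ SW solutions correspond to curves counted by $Gr$. Here minimality of $X$ is used to rule out the negative-index multiple covers of exceptional spheres that would otherwise obstruct the count.

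The main obstacle is the treatment of multiply covered square-zero tori, which is precisely why Taubes' Gromov invariant uses a nontrivial weighting rather than a naive count. For a torus component with self-intersection zero, the unbranched multiple covers all contribute, and the gluing produces a family of SW solutions whose count is governed by the spectral flow of a one-parameter family of operators together with an obstruction bundle over the relevant Jacobian torus; I would match the generating function assembling these torus contributions with the local weight in the definition of $Gr_{X,\omega}$ from \cite{Taubes:counting}*{\S2}. Assembling these steps into a bijection of index-$0$ moduli spaces modulo $2$ that intertwines the two weighted counts, and verifying the $b^2_+=1$ statement by noting that large $r$ places the perturbation $\mu_*$ in the chamber where the left-hand side of~\eqref{chamber} is negative (the symplectic chamber of Section~\ref{Closed 4-manifolds}), completes the argument.
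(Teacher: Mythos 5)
This theorem is not proved in the paper at all: it is imported verbatim from Taubes' work (the paper says only that it ``will be assumed throughout''), so there is no internal argument to compare against. Your proposal is a faithful high-level outline of the route Taubes actually takes in the cited series of papers: the a priori estimates and localization of $\alpha^{-1}(0)$ onto a $J$-holomorphic current in the large-$r$ limit of~\eqref{SW4} (the ``$SW\Rightarrow Gr$'' direction), the vortex-based gluing over somewhere-injective curves with the index/adjunction dimension match (the ``$Gr\Rightarrow SW$'' direction), the spectral-flow/obstruction-bundle analysis that forces the nontrivial weighting of multiply covered square-zero tori from \cite{Taubes:counting}, and the observation that large $r$ lands in the $\omega$-chamber of~\eqref{chamber} when $b^2_+(X)=1$. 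I would only flag that each of your four paragraphs compresses on the order of a hundred pages of analysis (notably the compactness argument controlling the genus and multiplicities of the limit current, the surjectivity of the deformation operator needed for the contraction mapping, and the full computation of the torus wall-crossing weights), so this should be read as a correct roadmap of the known proof rather than a self-contained one; within the logic of this paper the statement functions as a black box, exactly as you have treated its constituent steps.
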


%%%%%%%%%%%%%%%%%%%%%%%%%%%%%%%%%%%%%%%%%%%%%%%%%%%%
%%%%%%%%%%%%%%%%%%%%%%%%%%%%%%%%%%%%%%%%%%%%%%%%%%%%
\section{Floer homologies of 3-tori}
\label{Floer homologies of 3-tori}

Recall that $T^3=(S^1\times S^1)\times S^1$ is parametrized by $0\le x\le2\pi$ and $0\le y\le2\pi$ and $0\le\theta\le2\pi$. We use the unorthodox orientation of $T^3$ by the 3-form $-dx\,dy\,d\theta$ so that $\lambda_0\wedge d\lambda_0$ is positive (for the contact form $\lambda_0$ specified in Section~\ref{Introduction and main result}), and we orient the three $S^1$-factors by the 1-forms $dx,\,dy,\,-d\theta$. Then using Hom-duality and Poincar\'e-duality, the positive basis of $H_2(T^3)\cong H_1(T^3)\cong\ZZ^3$ is denoted
\begin{align*}
x &:=+[S^1\times\lbrace*\rbrace\times\lbrace*\rbrace]\\
y &:=+[\lbrace*\rbrace\times S^1\times\lbrace*\rbrace]\\
\bar\theta &:=-[\lbrace*\rbrace\times\lbrace*\rbrace\times S^1]
\end{align*}

\begin{remark}
For fixed $(r,s)\in\ZZ^2$ consider the map on $T^3$ given by $f(x,y,\theta)=(x+r\theta,y+s\theta,\theta)$. The induced map on $H_1(T^3)$ is given by $f_*x=x$, $f_*y=y$, $f_*\bar\theta=\bar\theta-rx-sy$. The induced map on $H_2(T^3)$ is given by $f_*x=x+r\bar\theta$, $f_*y=y+s\bar\theta$, $f_*\bar\theta=\bar\theta$.
\end{remark}

The spin-c structure $\fs_0:=\fs_{\xi_0}$ determined by $\xi_0$ is torsion, i.e. $c_1(\fs_0)=c_1(\xi_0)=0$, and so Taubes' isomorphism~\eqref{eqn:Taubes} reads
$$ECH_j(T^3,\xi_0,0)\cong \Hfrom^j(T^3, \fs_0)\cong \Hto_{-j}(T^3,\fs_0)$$
where $j\in J(T^3,\fs_0)\cong\ZZ$ as $\ZZ$-sets. We have made use of the fact that $T^3$ admits an orientation-reversing self-diffeomorphism. There is a unique class $j=[\xi_*]$ represented by an oriented 2-plane field $\xi_*$ on $T^3$ which is invariant under translations, and the $\ZZ$-grading is made absolute by identifying $[\xi_*]=0\in\ZZ$.

\begin{prop}
\label{prop:ECHSWF}
If $\Gamma\in H_1(T^3)$ is not zero then $\Hto_*(T^3,\fs_0+\Gamma)=0$. In the remaining case $\Gamma=0$, $\Hto_*(T^3,\fs_0)$ is zero in gradings above $0$, and for each $n\le0$
$$\Hto_n(T^3,\fs_0)\cong H_2(T^3)\cong\ZZ^3$$
such that this isomorphism commutes with the orientation-preserving diffeomorphisms of $T^3$.
\end{prop}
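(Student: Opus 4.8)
The plan is to split into the non-torsion case $\Gamma\neq 0$ and the torsion case $\Gamma=0$, proving vanishing in the former by an adjunction argument and computing the latter from the reducible locus.

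Suppose $\Gamma\neq 0$. Then $c_1(\fs_0+\Gamma)=2\,\PD(\Gamma)$ is non-torsion, since $H^2(T^3)\cong\ZZ^3$ is torsion-free. I would invoke the adjunction inequality in monopole Floer homology (equivalently, that $\Hto$ detects the Thurston norm, \cite{KM:book}): the three coordinate $2$-tori in $T^3$ are homologically essential embedded genus-$1$ surfaces whose classes generate $H_2(T^3)$, so nonvanishing of $\Hto_*(T^3,\fs_0+\Gamma)$ would force $\langle c_1(\fs_0+\Gamma),[T^2]\rangle=0$ for each of them, hence $c_1=0$. As $T^3$ has vanishing Thurston norm this rules out every spin-c structure except $\fs_0$ itself, so $\Hto_*(T^3,\fs_0+\Gamma)=0$.

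For $\Gamma=0$ I would work with the flat metric and identify the reducible critical locus with the Jacobian torus $\mathbb{T}=H^1(T^3;\RR)/H^1(T^3;\ZZ)\cong T^3$ of flat spin-c connections modulo gauge. Then I would appeal to the computation of the bar-version $\overline{\mathit{HM}}_*(T^3,\fs_0)$ as the coupled Morse homology of $\mathbb{T}$ (\cite{KM:book}). The feature special to $T^3$ is that the triple cup product $\Lambda^3H^1(T^3)\xrightarrow{\sim}H^3(T^3)$ is nonzero, so the coupling differential is nontrivial and cuts the formal rank of $H_*(\mathbb{T})$ down to rank $3$ in each grading. Extracting $\Hto_*$ from $\overline{\mathit{HM}}_*$ through the long exact sequence relating the three flavors of monopole Floer homology, I would use that $\Hto$ is the flavor bounded above and agreeing with $\overline{\mathit{HM}}$ in all sufficiently negative gradings, yielding $\Hto_n(T^3,\fs_0)\cong\ZZ^3$ there and $\Hto_n=0$ for $n\gg 0$.

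It then remains to fix the vanishing threshold and the naturality. The absolute grading by $2$-plane fields, normalized by $[\xi_*]=0$, together with a computation of the relevant $d$-invariant and the location of the contact element (which lives in grading $0$), should place the transition exactly at $n=0$, giving $\Hto_n\cong\ZZ^3$ for all $n\le 0$ and $\Hto_n=0$ for $n>0$. The surviving rank-$3$ summand is canonically $H_1(\mathbb{T})=H^1(T^3)$, which is $H_2(T^3)$ by Poincar\'e duality on $T^3$; naturality of this identification under $\Diff^+(T^3)$ is then immediate from the functoriality of the whole construction. The main obstacle is precisely the torsion computation over $\ZZ$: carrying out the cup-product--deformed coupled homology to confirm that the rank is exactly $3$ (rather than the formal $4$) in each grading, determining which graded piece of $H_*(\mathbb{T})$ survives, and tracking the duality and orientation conventions carefully enough to land on $H_2(T^3)$ and to pin the threshold at grading $0$. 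By contrast the case $\Gamma\neq 0$, and the mod-$2$ Betti numbers---which also follow from Taubes' isomorphism~\eqref{eqn:Taubes} together with Hutchings--Sullivan's computation of $ECH_*(T^3,\xi_0)$---are comparatively routine.
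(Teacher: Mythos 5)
The paper does not reprove this computation: its entire proof is a citation of \cite{KM:book}*{Proposition 3.10.1} (with a remark that the grading conventions are reversed), together with the alternative of quoting the ECH computation of \cite{Hutchings:T3} and applying Taubes's isomorphism~\eqref{eqn:Taubes}; the only step argued from scratch is the naturality under $\Diff_+(T^3)$. Your proposal instead sketches a direct derivation, and the sketch is structurally sound --- indeed it mirrors how Kronheimer--Mrowka actually prove the cited result: vanishing for non-torsion $c_1$ (your adjunction/Thurston-norm argument works, since the three coordinate tori have genus $1$ and generate $H_2$; for $T^3$ one could also argue more cheaply that the flat metric admits no reducibles when $c_1\ne 0$ and no irreducibles by the Weitzenb\"ock formula), then the coupled Morse homology of the Jacobian torus $\TT$ with the triple cup product as the deforming differential, then the exact triangle relating $\overline{\mathit{HM}}$ and $\Hto$. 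What your route buys is self-containedness; what it costs is that the two genuinely hard points --- the integral computation that the rank in each degree is exactly $3$, and the placement of the vanishing threshold at grading $0$ --- are explicitly deferred in your last paragraph, so as written this is an outline rather than a proof. Given that the paper settles both by citation, you should either do the same or actually carry out the coupled Morse computation; the intermediate state is the weakest option.

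The one place where the paper supplies content beyond the citation is the naturality statement, and that is precisely the step you dispatch with ``immediate from the functoriality of the whole construction.'' Functoriality gives an action of $\Diff_+(T^3)$ on the Floer groups, but it does not by itself identify the abstract $\ZZ^3$ with $H_2(T^3)$ equivariantly. The paper's argument is that the Floer groups are modules over $H^*(\bB^\sigma(T^3,\fs_0))\cong\big(\Lambda^* H_1(T^3)/\text{Torsion}\big)\otimes\ZZ[U]$, that this module structure is natural for orientation-preserving diffeomorphisms preserving $\fs_0$, and that every $f\in\Diff_+(T^3)$ preserves $\fs_0$ because it is the unique torsion spin-c structure. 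Your identification of the surviving summand with $H_1(\TT)\cong H^1(T^3)\cong H_2(T^3)$ is the right mechanism, but you need to say why a diffeomorphism acts on that summand by the induced map on $H_2$ --- which is exactly the module-structure point you omit.
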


\begin{proof}
This is precisely \cite{KM:book}*{Proposition 3.10.1} with the remark that our grading conventions are opposite to those in \cite{KM:book}. Alternatively, these group isomorphisms are also established directly on ECH \cite{Hutchings:T3}*{Theorems 1.2 and 1.3} and so we can apply Taubes' isomorphisms~\eqref{eqn:Taubes}. We briefly clarify the assertion about naturality with respect to diffeomorphisms of $T^3$. The monopole Floer groups are modules over $H^*(\bB^\sigma(T^3,\fs_0))\cong\big(\Lambda^* H_1(T^3)/\text{Torsion}\big)\otimes\ZZ[U]$, the isomorphism being natural with respect to the action of the group $\lbrace f\in\Diff_+(T^3)\;|\;f^*\fs_0=\fs_0\rbrace$ \cite{KM:book}*{Proposition 9.7.1}. But $f^*\fs_0=\fs_0$ for all $f\in\Diff_+(T^3)$, because $\fs_0$ is the unique torsion spin-c structure on $T^3$.
\end{proof}

\begin{remark}
As explained in \cite{Hutchings:T3} with $\ZZ$ coefficients, the degree zero generators of the ECH chain complex consist of the empty set and, for each $v\in\ZZ^2$, a generator $h(v)=\lbrace(h_1,1),(h_2,1)\rbrace$ consisting of two hyperbolic orbits satisfying $[h_1]=v$ and $[h_2]=-v$. These generators satisfy the relation $h(v) + h(v') = h(v+v')$. Thus, $ECH_0(T^3,\xi_0,0)$ over $\ZZ/2$ is identified with a copy of $\ZZ/2$ (generated by the empty set) plus a copy of $(\ZZ/2)^2$ (given by the generators $h(v)$ modulo the relation). This can be identified with $H_2(T^3;\ZZ/2)\cong(\ZZ/2)^2\oplus\ZZ/2$ and is natural with respect to contactomorphisms.
\end{remark}

%%%%%%%%%%%%%%%%%%%%%%%%%%%%%%%%%%%%%%%%%%%%%%%%%%%%
%%%%%%%%%%%%%%%%%%%%%%%%%%%%%%%%%%%%%%%%%%%%%%%%%%%%
\section{Invariants of 2-tori}
\label{Invariants of 2-tori}

In this section we build the tentative Lagrangian torus invariants for $L\subset(X,\omega)$ with $X$ minimal, indexed by relative classes $A\in H_2(X_0,UT^* L)$, which effectively count (certain) $J$-holomorphic curves in a completion of $(X_0,\omega,J)$ representing a given relative class $A$. Each invariant is an element of $ECH_*(T^3,\xi_0,\partial A)\cong\Hto_*(T^3,\fs_0+\partial A)$, so in light of Proposition~\ref{prop:ECHSWF} we must assume that $\partial A=0\in H_1(UT^* L)$.

\bigskip
In order to obtain well-defined counts of $J$-holomorphic curves which represent a given relative class $A$, we will need to ensure a bound on their energy as well as a bound on the symplectic action of their orbit sets. As explained in \cite{Hutchings:fieldtheory}, these bounds are given by the quantity
\begin{equation}
\rho(A):=\int_\Sigma\omega+\int_{\partial\Sigma}\lambda_0
\end{equation}
where $u:\Sigma\to X_0$ is any given smooth map which represents $A$, whose domain $\Sigma$ is a compact oriented smooth surface with boundary satisfying $u(\partial \Sigma)\subset \partial X_0$.

Now, three perturbations will be made to $\lambda_0$. First, we will want all Reeb orbits to be nondegenerate in order to define ECH. Second, we will want all Reeb orbits of action less than $\rho(A)$ to be $\rho(A)$-flat in order to relate the $J$-holomorphic curves to Seiberg--Witten theory. Third, we will want the elliptic orbits of action less than $\rho(A)$ to be ``$\rho(A)$-positive'' in order to guarantee transversality of the relevant moduli spaces of $J$-holomorphic curves (specifically, to rule out negative ECH index curves). As defined in \cite{Hutchings:beyond}, the quantifier ``$\rho(A)$-positive'' means the following:

\begin{definition}
Fix $L>0$. Let $\gamma$ be a nondegenerate embedded elliptic orbit with rotation class $\theta\in\RR/\ZZ$ and symplectic action $\aA(\gamma)<L$. Then $\gamma$ is \textit{L-positive} if $\theta\in(0,\aA(\gamma)/L)\mod 1$. Here we note that the linearization of the Reeb flow around $\gamma$ is conjugate to a rotation by angle $2\pi\theta$ with respect to a trivialization $\tau$ of $\gamma^*\xi_0$, and the equivalence class of this rotation number $\theta$ in $\RR/\ZZ$ does not depend on $\tau$.
\end{definition}

\noindent
Such perturbations give us control over the orbits of low symplectic action, at the expense of producing new orbits of high symplectic action with unknown properties. This is sufficient for the purposes of this paper, because for a given class $A$ only the orbit sets of symplectic action less than $\rho(A)$ are relevant to the tentative Lagrangian torus invariant.

\begin{lemma}
\label{lem:nbhd}
For a given $A\in H_2(X_0,UT^* L)$ there is a choice of neighborhood $\nN$ of $L\subset X$ such that $(X-\nN,\omega)$ is a symplectic manifold with contact-type boundary $(T^3,\lambda_A)$. Here, $\lambda_A$ is a nondegenerate contact form with contact structure $\xi_0=\ker\lambda_0$ but whose orbits of symplectic action less than $\rho(A)$ are all $\rho(A)$-flat and are either positive hyperbolic or $\rho(A)$-positive elliptic.
\end{lemma}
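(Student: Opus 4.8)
The plan is to produce $\lambda_A$ by a two-stage modification of the Morse--Bott contact form $\lambda_0$ and then to reinterpret the result as a deformation of the neighborhood $\nN$. The key preliminary observation is that the Reeb field of $\lambda_0$ is $\cos\theta\,\partial_x+\sin\theta\,\partial_y$, so $\theta$ is constant along the Reeb flow and the closed orbits occur precisely in the rational directions $\tan\theta\in\QQ\cup\lbrace\infty\rbrace$; for each primitive $(p,q)\in\ZZ^2$ the parallel closed geodesics foliate the torus $\lbrace\theta=\theta_{p,q}\rbrace$ into an $S^1$-family, making $\lambda_0$ a Morse--Bott contact form whose orbit set is a $2$-torus for each such direction. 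Since the action of the embedded orbit in direction $(p,q)$ grows like $\sqrt{p^2+q^2}$, only finitely many of these Morse--Bott tori carry an orbit of action below $L:=\rho(A)$. This finiteness is what reduces everything to a controllable local problem near finitely many tori; the orbits of action $\geq L$, old or newly created, are irrelevant to the invariant and may be left with unknown properties.

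First I would apply a standard Morse--Bott perturbation (in the sense used for $T^3$ in \cite{Hutchings:T3}) supported near these finitely many tori: choose a Morse function with exactly two critical points on each $S^1$ of orbits and replace $\lambda_0$ by $g\lambda_0$, with $g$ a small positive perturbation of $1$ built from these Morse functions, so that $\ker\lambda_A=\ker\lambda_0=\xi_0$ is preserved. Each Morse--Bott torus then contributes two nondegenerate embedded orbits, and the explicit computation of the linearized return map on $T^3$ shows that one of them is positive hyperbolic and the other elliptic, consistent with the orbit structure underlying the generator description recalled after Proposition~\ref{prop:ECHSWF}. The remaining point is to control the rotation class of each elliptic orbit: the Morse--Bott perturbation makes this number small, and by fixing the sign of the perturbing function I can force it to be a small \emph{positive} number $\theta$. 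Since for the finitely many relevant elliptic orbits the ratios $\aA(\gamma)/L$ are fixed positive quantities bounded below, shrinking the perturbation until $\theta$ is smaller than all of them puts each $\theta\in(0,\aA(\gamma)/L)\bmod 1$, which is exactly $L$-positivity (in the sense of \cite{Hutchings:beyond}).

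Second, with a nondegenerate $\lambda_A$ of the correct orbit types in hand, I would invoke Taubes' $L$-flat approximation from \cite{Taubes:ECH=SWF1}*{Appendix}, modifying $(\lambda_A,J)$ only on disjoint small tubular neighborhoods of the orbits of action below $L$. This standardizes the contact form to its flat normal model along each such orbit while leaving the set of orbits of action $<L$, together with their hyperbolic/elliptic dichotomy and their rotation numbers, unchanged; in particular the $L$-positivity and positive hyperbolicity arranged above survive. Finally, because the original boundary is of contact type, $\omega$ equals $d(e^s\lambda_0)$ on a symplectization collar; any contact form $\lambda_A$ with $\ker\lambda_A=\xi_0$ is realized as a graph $\lbrace s=h\rbrace$ over the boundary inside this collar, and pushing $\partial\nN$ out to that graph exhibits $\lambda_A$ as the contact-type boundary of a deformed neighborhood $\nN$, as required.

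The step I expect to be the main obstacle is the middle one: simultaneously achieving nondegeneracy, the precise positive-hyperbolic/elliptic split, and the correct \emph{sign} and smallness of the elliptic rotation numbers. The hyperbolic/elliptic dichotomy and the sign of $\theta$ are both governed by the second-order behavior of the perturbation transverse to each Morse--Bott torus, so they require the explicit linearized-return-map analysis near the geodesic tori of $T^3$ rather than any soft genericity argument; reconciling this with the rigidity of the later $L$-flat normalization, which must not disturb the rotation data, is the delicate point, and it is precisely why the argument is organized so that the flat approximation is performed last.
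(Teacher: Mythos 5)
Your proposal is correct and follows essentially the same route as the paper: a Bourgeois-type Morse--Bott perturbation of $\lambda_0$ near the finitely many orbit tori of action below $\rho(A)$, producing one positive hyperbolic and one $L$-positive elliptic orbit per torus, followed by Taubes' $L$-flat approximation and the realization of the new contact form as a graph in the symplectization collar (the paper packages the first step as its Lemma~\ref{lem:Bourgeois} and defers the rest to \cite{Gerig:taming}*{Lemma 3.9}). The only nuance is that in the paper the positivity of the elliptic rotation numbers is not arranged by choosing the sign of the perturbing function but is forced by the shear of the unperturbed linearized return map, i.e.\ by $a'\times a''=1>0$ for $\lambda_0$; for small perturbations this shear dominates and determines the sign.
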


To prove this lemma, we pass from $\lambda_0$ to a slightly more general scenario, analyzing a 1-form
\begin{equation}
\label{eqn:form}
\lambda=a_1(\theta)dx + a_2(\theta)dy
\end{equation}
defined by a smooth pair $a=(a_1,a_2):[0,2\pi]\to\RR^2-\lbrace(0,0)\rbrace$. Let $a\times a':=a_1a_2'-a_2a_1'$, where the tick-mark signifies the derivative with respect to $\theta$. The condition for $\lambda$ to be a positive contact form (with respect to our volume form on $T^3$) is then $a\times a'(\theta)>0$ for all $\theta\in[0,2\pi]$. The Reeb field of $\lambda$ is $\frac{1}{a\times a'(\theta)}\left(a_2'(\theta)\frac{\partial}{\partial x}-a_1'(\theta)\frac{\partial}{\partial y}\right)$, and the condition for which $T(\theta_0)\subset T^3$ is a torus foliated by orbits is given by
\begin{equation}
\label{eqn:angle}
\frac{a_1'(\theta_0)}{a_2'(\theta_0)}\in\QQ\cup\lbrace\pm\infty\rbrace
\end{equation}
Every embedded orbit in $T(\theta_0)$ represents the same class in $H_1(T(\theta_0))$ and they all have the same action $\aA(\theta_0)>0$. The next lemma below shows how to perturb these Morse--Bott orbits, in the sense of \cite{Bourgeois:thesis} and adapted from \cite{Hutchings:beyond}*{Lemma 5.4}.

\begin{lemma}
\label{lem:Bourgeois}
Suppose the positive contact form $\lambda=a_1(\theta)dx + a_2(\theta)dy$ satisfies $a'\times a''(\theta_0)>0$ for all $\theta_0\in[0,2\pi]$ that satisfy~\eqref{eqn:angle}. Then for every $L>0$ and sufficiently small $\delta>0$, there exists a perturbation $e^{f_{\delta,L}}\lambda$ of $\lambda$ satisfying the following properties:

	$\bullet$ $f_{\delta,L}\in C^\infty(T^3)$ satisfies $||f_{\delta,L}||_{C^0}<\delta$,

	$\bullet$ Each family of orbits in the torus $T(\theta_0)$ with $\aA(\theta_0)<L$ is replaced by a positive hyperbolic\\
	\indent~~~orbit and an $L$-positive elliptic orbit, both of action less than $L$ and within $\delta$ of $\aA(\theta_0)$,

	$\bullet$ $e^{f_{\delta,L}}\lambda$ has no other embedded orbits of action less than $L$.
\end{lemma}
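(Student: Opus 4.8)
The plan is to treat each torus $T(\theta_0)$ (for $\theta_0$ satisfying~\eqref{eqn:angle} and $\aA(\theta_0)<L$) as a clean Morse--Bott family of closed Reeb orbits and apply the standard Morse--Bott perturbation scheme of \cite{Bourgeois:thesis}, adapting \cite{Hutchings:beyond}*{Lemma 5.4}. First I would set up the Morse--Bott geometry. The contact condition $a\times a'>0$ together with the hypothesis $a'\times a''>0$ says that the Reeb direction $(a_2',-a_1')$ in the $(x,y)$-torus rotates strictly monotonically with $\theta$; hence~\eqref{eqn:angle} holds only at isolated values of $\theta_0$, and since the action of an embedded orbit in $T(\theta_0)$ grows with the winding number of its primitive direction, only finitely many such tori satisfy $\aA(\theta_0)<L$. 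Each relevant $T(\theta_0)\cong T^2$ is foliated by closed Reeb orbits in a single primitive class, so its leaf space is a circle $S^1_{\theta_0}$, and this is a nondegenerate Morse--Bott submanifold of $T^3$.

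Next I would construct the perturbation explicitly. On each leaf space $S^1_{\theta_0}$ choose a perfect Morse function $g_{\theta_0}$ with exactly one maximum and one minimum, pull it back to a function on $T(\theta_0)$ that is constant along the Reeb orbits, and extend it $\theta$-independently to a small neighborhood of $T(\theta_0)$; then set $f_{\delta,L}=\delta\sum_{\theta_0}g_{\theta_0}$, the finite sum over the relevant tori. The bound $\|f_{\delta,L}\|_{C^0}<\delta$ is then immediate from the normalization.

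The analysis of the perturbed orbits then proceeds by Morse--Bott theory: for $\delta$ small, the closed orbits of $e^{f_{\delta,L}}\lambda$ with action less than $L$ and near $\aA(\theta_0)$ are in bijection with $\crit(g_{\theta_0})$, yielding exactly two orbits per torus with actions within $\delta$ of $\aA(\theta_0)$; away from the finitely many perturbation regions a $C^0$-small perturbation produces no new short orbits, which supplies the final clause that there are no other embedded orbits of action less than $L$. The heart of the argument, and the step I expect to be the main obstacle, is the linearized-return-map computation identifying the maximum with a positive hyperbolic orbit and the minimum with an $L$-positive elliptic orbit. In the unperturbed Morse--Bott torus the return map on $\xi_0$ is a parabolic shear whose off-diagonal coefficient $c$ is governed by the transverse rotation rate of the Reeb direction, i.e.\ by $a'\times a''$; the Morse perturbation feeds the Hessian $g''$ into the complementary entry, so the eigenvalues become $1\pm\sqrt{-c\,\delta\,g''}$ with $c$ proportional to $a'\times a''>0$. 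At the minimum ($g''>0$) this gives a conjugate pair on the unit circle with small positive rotation number of order $\sqrt{\delta}$, hence an elliptic orbit whose rotation class lies in $(0,\aA(\gamma)/L)$ once $\delta$ is taken small, i.e.\ $L$-positive; at the maximum ($g''<0$) it gives two positive real eigenvalues, hence a positive hyperbolic orbit.

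The sign hypothesis $a'\times a''>0$ is exactly what forces this pairing of a positive hyperbolic orbit with an $L$-positive elliptic orbit, rather than the negative-hyperbolic or negative-rotation alternatives that would arise from the opposite sign. Consequently the bulk of the remaining work is to make this linearization rigorous and to carry through the sign bookkeeping carefully, so that the eigenvalue types and the rotation number are controlled uniformly over the finitely many tori of action less than $L$.
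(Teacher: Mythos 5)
Your proposal follows essentially the same route as the paper: a Bourgeois/Morse--Bott perturbation of the finitely many foliated tori of action less than $L$, followed by a linearized-return-map computation in which the positivity of $a'\times a''$ makes the unperturbed return map a shear with positive coefficient, forcing the two perturbed orbits to be a positive hyperbolic orbit and an elliptic orbit of small \emph{positive} rotation number (hence $L$-positive for $\delta$ small). The one step you treat more casually than the paper is the absence of other short orbits: since $\|f_{\delta,L}\|_{C^0}<\delta$ alone does not control the Reeb field of $e^{f_{\delta,L}}\lambda$ (which depends on $df_{\delta,L}$), the paper justifies this third bullet with an Arzel\`a--Ascoli argument --- a sequence of extra embedded orbits of action less than $L$ for perturbations $\delta_k\to 0$ would have a subsequence converging to one of the original degenerate orbits, contradicting the local Morse--Bott analysis --- and that compactness argument is worth making explicit in place of your assertion that no new short orbits appear away from the perturbation regions.
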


\begin{proof}
The function $f_{\delta,L}$ is given by Bourgeois' perturbation \cite{Bourgeois:thesis} of $\lambda$, which breaks up each $T(\theta_0)$ into two embedded nondegenerate orbits of action slightly less than $\aA(\theta_0)$ in addition to orbits of action greater than $L$. Namely, there is a positive hyperbolic orbit and an elliptic orbit $e_{\theta_0}$, both representing the same class in $H_1(T(\theta_0))$. For sufficiently small perturbations there cannot exist other orbits of action less than $L$, otherwise we would find a sequence $\lbrace(\gamma_k,\delta_k)\rbrace_{k\in\NN}$ of such orbits of uniformly bounded action $L$ and perturbations $\delta_k\to0$ for which a subsequence converges to one of the original degenerate orbits (by the Arzel\`a--Ascoli theorem), yielding a contradiction.

It remains to compute the rotation class of the elliptic orbit created from each Morse--Bott family. Let $a^\perp:=a_2(\theta)\partial_x-a_1(\theta)\partial_y$. The basis $\langle\partial_\theta,-a^\perp\rangle$ defines a trivialization $\tau$ of the contact structure $\xi$ since
$$d\lambda(\partial_\theta,-a^\perp)=a\times a'(\theta)>0$$
We then compute the Lie derivatives of the Reeb field $R$,
$$\lL_{\partial_\theta}R=\frac{a'\times a''}{(a\times a')^2}(-a^\perp),\indent \lL_{a^\perp}R=0$$
to see that the linearized Reeb flow along $T(\theta_0)$ induces the linearized return map
$$P_{T(\theta_0)}:=\1+\begin{pmatrix}
0&0\\ r(\theta_0)\aA(\theta_0)&0
\end{pmatrix}$$
on $\xi$ in the chosen basis, where $r:=\frac{a'\times a''}{(a\times a')^2}$. The linearized return map along $e_{\theta_0}$ is a perturbation of $P_{T(\theta_0)}$, so the rotation number of $e_{\theta_0}$ has the same sign as $r(\theta_0)$, i.e. it has the sign of $a'\times a''(\theta_0)$. This rotation number can be made arbitrarily small by choosing $\delta$ sufficiently small (copying the same proof of \cite{Gerig:taming}*{Lemma 3.6}), so it follows from the assumption on $a'\times a''(\theta_0)$ that each $e_{\theta_0}$ is $L$-positive.
\end{proof}

\begin{proof}[Proof of Lemma~\ref{lem:nbhd}]
Since $\lambda_0$ satisfies the hypothesis of Lemma~\ref{lem:Bourgeois} ($a'\times a''=1$), we can perturb this Morse--Bott contact form $\lambda_0$ (\`a la Bourgeois) so that all orbits of action less than $\rho(A)$ are nondegenerate and $\rho(A)$-positive when elliptic. The remainder of the proof follows that of \cite{Gerig:taming}*{Lemma 3.9} verbatim.
\end{proof}

Fix a relative class $A$. Thanks to Lemma~\ref{lem:nbhd}, we choose $\nN$ so that $(X_0,\omega)$ is a strong symplectic cobordism from the empty set $(\varnothing,0)$ to the contact 3-manifold $(T^3,\lambda_A)$. Let $\overline{X_0}$ denote its completion, and fix a cobordism-admissible almost complex structure $J$ on $(\overline{X_0},\omega)$. As shown in \cite{Hutchings:fieldtheory}, there are induced \textit{ECH cobordism maps} of the form
\begin{equation}
\label{eqn:chainmap}
ECH_0(\varnothing,0,0)\to ECH_*(T^3,\xi_0,0)
\end{equation}
defined by suitable counts of Seiberg--Witten solutions on $\overline{X_0}$. Since $ECH_0(\varnothing,0,0)\cong\ZZ/2$ is generated by the empty set of orbits, the map~\eqref{eqn:chainmap} should really be viewed as an element of $ECH_*(T^3,\xi_0,0)$. We now present a definition of this element via counts of $J$-holomorphic curves in $\overline{X_0}$.

Choose a nonnegative even integer $I$ and a set of $\frac{I}{2}$ disjoint points $\bar z:=\lbrace z_1,\ldots,z_{I/2}\rbrace\subset X_0$. Denote by $\mM_I(\varnothing,\Theta;A,\bar z)$ the subset of elements in $\mM_I(\varnothing,\Theta)$ which represent the class $A$ and intersect all points $\bar z$. Define the chain
\begin{equation}
\label{eqn:chain}
\sum_\Theta\sum_{\cC\in\mM_I(\varnothing,\Theta;A,\bar z)}q(\cC)\cdot\Theta\in ECC_*(T^3,\lambda_A,0)
\end{equation}
where $\Theta$ indexes over the admissible orbit sets, and where the weights $q(\cC)\in\ZZ/2$ are equal to 1 unless $\cC$ contains an index zero multiply covered torus, in which case the weight depends on the multiplicity of each such torus and is given by \cite{Taubes:counting}*{Definition 3.2}. The following theorem implies that this chain~\eqref{eqn:chain} is well-defined (in particular, the relevant moduli spaces are 0-dimensional compact manifolds) and that the homology class of~\eqref{eqn:chain} does not depend on the choice of $(J,\bar z)$.

\begin{theorem}
\label{thm:class}
Fix $I\in2\NN\cup\lbrace0\rbrace$ and $A\in H_2(X_0,UT^* L)$ such that $\partial A=0$. For generic $J$, the chain~\eqref{eqn:chain} induces a well-defined element
$$\Phi(A,I)=U^{I/2}\circ\Phi(A,0)\in ECH_{g(A,I)}(T^3,\xi_0,0)$$
in a single absolute grading $g(A,I)\in\ZZ$, where $U$ denotes the (degree $-2$) U-map in ECH.
\end{theorem}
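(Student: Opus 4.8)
The plan is to verify, in turn, the five assertions packaged into the statement: that for generic $J$ the constrained moduli spaces are compact $0$-manifolds, that \eqref{eqn:chain} is a $\partial_\text{ECH}$-cycle, that its homology class is independent of $(J,\bar z)$, that it lives in a single grading $g(A,I)$, and that $\Phi(A,I)=U^{I/2}\circ\Phi(A,0)$. The whole argument adapts Taubes' weighted-count scheme \cite{Taubes:counting} and its near-symplectic analogue \cite{Gerig:taming}, with the already-constructed Seiberg--Witten cobordism maps of \cite{Hutchings:fieldtheory} serving as the invariant target that these curve counts compute.

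First I would set up compactness and transversality. Because $\partial A=0$, each orbit set $\Theta$ occurring in \eqref{eqn:chain} is null-homologous, and $\rho(A)$ bounds both the $\omega$-energy of any $\cC$ representing $A$ and the action $\aA(\Theta)$ of its negative ends; hence only the finitely many orbit sets of action below $\rho(A)$ are relevant, and by Lemma~\ref{lem:nbhd} each is nondegenerate and either positive hyperbolic or $\rho(A)$-positive elliptic. For somewhere-injective curves a generic $J$ realizes the expected dimension, and imposing the $I/2$ point constraints cuts $\mM_I(\varnothing,\Theta;A,\bar z)$ down to dimension zero; SFT compactness plus the energy bound then identifies any limit with a single index-$I$ curve, since every piece of a limiting building carries nonnegative ECH index. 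The main obstacle is precisely the failure of transversality for multiply covered curves: the $\rho(A)$-positivity of the elliptic orbits is engineered to forbid negative-index covers, and minimality of $X$ removes the exceptional-sphere covers, but the index-zero multiply covered tori over the elliptic orbits persist and never achieve transversality. These I would treat exactly as in \cite{Taubes:counting}*{\S2--3}, assigning the weights $q(\cC)$ so that the resulting signed count is stable under the torus degenerations; this is the delicate analytic heart of the construction.

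Next come the cycle property, invariance, and grading. Viewing \eqref{eqn:chain} as the image of the generator $\varnothing$ under the cobordism chain map $ECC_0(\varnothing,0,0)\to ECC_*(T^3,\lambda_A,0)$, I would compute the boundary of the compact $1$-manifold obtained from $\mM_{I+1}(\varnothing,\Theta;A,\bar z)$ (index $I+1$ curves through the same $I/2$ points): its ends consist of an index-$1$ symplectization curve breaking off the negative end of an index-$I$ curve, which assemble into the $\Theta$-coefficient of $\partial_\text{ECH}$ applied to \eqref{eqn:chain}, together with index-zero-torus degenerations that cancel once the weights $q$ are incorporated; since the total boundary count of a compact $1$-manifold vanishes mod $2$, this shows \eqref{eqn:chain} is $\partial_\text{ECH}$-closed. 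Independence of $(J,\bar z)$ is the standard parametrized argument: a generic path $(J_t,\bar z_t)$ yields a $1$-dimensional cobordism whose boundary exhibits the two endpoint chains as chain-homotopic. Finally, the grading is forced by the relation $|\Theta^+|-|\Theta^-|=I(\cC)$ together with additivity of the ECH index under gluing of symplectization pieces: since $\Theta^+=\varnothing$ and the ECH index of any $\cC$ representing $A$ with $I(\cC)=I$ is fixed, any two orbit sets appearing in \eqref{eqn:chain} are joined by an index-zero symplectization class and therefore share the single grading $g(A,I)$.

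For the last assertion I would realize each point constraint as the ECH $U$-map. Pushing a constraint point $z_k$ deep into the negative symplectization end $(-\infty,0]\times T^3$ and applying the usual gluing/neck-stretching argument, an index-$I$ curve through that point degenerates into an index-$(I-2)$ curve in $\overline{X_0}$ through the remaining $I/2-1$ points, asymptotic to some $\Theta'$, together with an index-$2$ symplectization curve from $\Theta'$ to $\Theta$ passing through a marked point, the count of which is exactly the degree $-2$ map $U$ on $ECH_*(T^3,\xi_0,0)$. Thus one point constraint equals composition with $U$ and computes $\Phi(A,I-2)$ on the $\overline{X_0}$ side; iterating $I/2$ times yields $\Phi(A,I)=U^{I/2}\circ\Phi(A,0)$, with $g(A,I)=g(A,0)-I$.
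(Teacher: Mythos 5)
Your outline matches the paper's strategy for most of the statement: the compactness/transversality setup via the $\rho(A)$-energy bound, Lemma~\ref{lem:nbhd}, minimality, and Taubes' weights $q(\cC)$; the cycle property and $\bar z$-independence via boundaries of index-$(I+1)$ moduli spaces; the grading argument from $|\Theta^+|-|\Theta^-|=I(\cC)$; and the $U$-map decomposition by pushing a constraint point into the negative end. All of this is what the paper imports verbatim from \cite{Gerig:taming}*{\S3.4}, with Lemma~\ref{lem:nbhd} playing the role of \cite{Gerig:taming}*{Lemma 3.9}.

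The genuine gap is your treatment of $J$-independence. You assert it follows from ``the standard parametrized argument'' with a generic path $(J_t,\bar z_t)$. That argument is exactly what is \emph{not} available here, and it is the reason ECH cobordism maps are defined through Seiberg--Witten theory in \cite{Hutchings:fieldtheory} rather than by holomorphic curves. Along a generic one-parameter family of cobordism-admissible almost complex structures one cannot exclude isolated parameter values at which multiply covered curves (or buildings containing them) of negative ECH index appear: the index inequality controlling multiple covers, and the $\rho(A)$-positivity mechanism you invoke to kill negative-index covers, are statements for a fixed generic $J$ and do not persist in one-parameter families. Consequently the weighted count can jump across such walls, and the claimed one-dimensional cobordism need not be compact with the stated boundary. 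The paper circumvents this by a different route: it invokes \cite{Gerig:Gromov}*{\S4} to identify the chain~\eqref{eqn:chain}, for each fixed generic $J$, with a suitable count of Seiberg--Witten solutions on $\overline{X_0}$, so that $\Phi(A,I)$ coincides with the image of the SW-defined ECH cobordism map~\eqref{eqn:chainmap}; $J$-independence is then inherited from gauge theory, where the parametrized-moduli argument does work. You gesture at this identification in your opening paragraph but then do not use it where it is actually needed; as written, the $J$-independence step would fail.
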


\begin{proof}
The fact that the chain~\eqref{eqn:chain} is a cycle whose homology class $\Phi(A,I)$ does not depend on the choice of $\bar z$ follows \cite{Gerig:taming}*{\S3.4} verbatim (where the crucial Lemma~\ref{lem:nbhd} replaces the corresponding \cite{Gerig:taming}*{Lemma 3.9}); same for its decomposition in terms of the U-map. Note that we have made use of the fact that $X$ is minimal here, as multiply-covered $J$-holomorphic exceptional spheres sometimes cause issues with finiteness of $\mM_I(\varnothing,\Theta;A,\bar z)$. The fact that $\Phi(A,I)$ also does not depend on the choice of $J$ follows \cite{Gerig:Gromov}*{\S4} verbatim to relate the construction of the chain~\eqref{eqn:chain} to suitable counts of Seiberg--Witten solutions on $\overline{X_0}$, so that $\Phi(A,I)$ agrees with the image of the ECH cobordism map~\eqref{eqn:chainmap} defined in \cite{Hutchings:fieldtheory}.
\end{proof}

Thanks to the U-map, the elements in Theorem~\ref{thm:class} for a given relative class $A$ are all determined by one such element. The relevant Lagrangian torus invariant will be defined as the element for a specific integer $I$ depending on $A$, such that $g(A,I)=0$. In order to describe this specific integer, we must analyze the decomposition of $X$ into $X_0$ and $\nN$.

\begin{lemma}
\label{lem:lift}
For any class $A\in H_2(X_0,UT^* L)$ whose boundary is zero in $H_1(UT^* L)$, the ECH index $I(\tilde A)$ of a lift $\tilde A\in H_2(X)$ of $A$ is independent of the choice of such lift.

For any spin-c structure $\fs\in\Spinc(X_0)$ whose restriction to its boundary $UT^* L$ is trivial, the Seiberg--Witten index $d(\tilde\fs)$ of a lift $\tilde\fs\in\Spinc(X)$ of $\fs$ is independent of the choice of such lift. 
\end{lemma}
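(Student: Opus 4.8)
The plan is to treat both parts in parallel, reducing each to the facts that $[L]\cdot[L]=0$ and that the relevant characteristic class pairs trivially with $[L]$. First I would pin down the ambiguity in the choice of lift. Write $X=X_0\cup\nN$, where $\nN\simeq L$ is the disk normal bundle and $\partial\nN=UT^* L\cong T^3$; since $L$ has trivial self-intersection its normal bundle is trivial, and the chosen framing identifies $\partial\nN\cong L\times S^1$. Excision gives $H_2(X_0,UT^* L)\cong H_2(X,\nN)$, and the long exact sequence of the pair $(X,\nN)$ reads $H_2(\nN)\to H_2(X)\to H_2(X,\nN)\xrightarrow{\partial}H_1(\nN)$. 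The hypothesis $\partial A=0\in H_1(UT^* L)$ maps to $0\in H_1(\nN)$, so $A$ lifts, and any two lifts differ by the image of $H_2(\nN)=H_2(L)=\ZZ\langle[L]\rangle$; thus the lifts form the coset $\tilde A+\ZZ[L]$. Dually, for the second part the lifts $\tilde\fs$ of $\fs\in\Spinc(X_0)$ differ by $\ker\!\big(H^2(X)\to H^2(X_0)\big)=\text{image of }H^2(X,X_0)\to H^2(X)$, and excision identifies $H^2(X,X_0)\cong H^2(\nN,\partial\nN)\cong\ZZ$ with the Thom class mapping to $\PD[L]$; so the lifts form the coset $\tilde\fs+\ZZ\,\PD[L]$.

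Next I would compute how each index varies along its coset. For the first part the closed ECH (equivalently Gromov) index is $I(\tilde A)=\tilde A\cdot\tilde A+\langle c_1(TX),\tilde A\rangle$, which gives
$$I(\tilde A+k[L])-I(\tilde A)=2k\,(\tilde A\cdot[L])+k^2([L]\cdot[L])+k\langle c_1(TX),[L]\rangle.$$
For the second part, adding $k\,\PD[L]$ changes $c_1(\tilde\fs)$ by $2k\,\PD[L]$, so $d(\tilde\fs)=\tfrac14\big(c_1(\tilde\fs)^2-2\chi(X)-3\sigma(X)\big)$ yields
$$d(\tilde\fs+k\,\PD[L])-d(\tilde\fs)=k\langle c_1(\tilde\fs),[L]\rangle+k^2([L]\cdot[L]).$$
It therefore suffices to establish the four vanishings $[L]\cdot[L]=0$, $\tilde A\cdot[L]=0$, $\langle c_1(TX),[L]\rangle=0$, and $\langle c_1(\tilde\fs),[L]\rangle=0$.

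The first is the standing hypothesis that $L$ has trivial self-intersection. For $\tilde A\cdot[L]$, I would use the preferred lift: because $\partial A=0$, the class $A$ lifts through $H_2(X_0)\to H_2(X_0,UT^* L)$, hence is represented by a closed surface in $X_0=X-\nN$, which is disjoint from $L\subset\op{int}\nN$; so $\tilde A\cdot[L]=0$ for that lift, and $[L]\cdot[L]=0$ propagates the vanishing across the whole coset. For $\langle c_1(TX),[L]\rangle$ I would use that $L$ is Lagrangian, so $TX|_L\cong TL\otimes_\RR\CC$, and as $L=T^2$ is parallelizable this is trivial, whence $c_1(TX)|_L=0$. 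Finally, for the purely smooth second part I would use triviality of the normal bundle once more: under $\partial\nN\cong L\times S^1$ the restriction $H^2(\nN)\cong H^2(L)\to H^2(\partial\nN)=H^2(T^3)$ is injective by the Künneth theorem; since $\fs|_{T^3}=\fs_0$ has $c_1(\fs_0)=0$, the class $c_1(\tilde\fs)|_\nN$ restricts to $0$ on $\partial\nN$ and therefore vanishes, giving $\langle c_1(\tilde\fs),[L]\rangle=\langle c_1(\tilde\fs)|_\nN,[L]\rangle=0$.

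The index bookkeeping is harmless; the step deserving the most care is obtaining the \emph{exact} (not merely mod~$2$) vanishing of $\langle c_1(\tilde\fs),[L]\rangle$ in the smooth second part, where the characteristic-class congruence only gives $\langle c_1(\tilde\fs),[L]\rangle\equiv[L]\cdot[L]\pmod 2$; this is precisely where the triviality of the normal bundle and the injectivity argument are essential, together with checking that the Thom class maps to $\PD[L]$ so that the lift ambiguity is genuinely $\ZZ\,\PD[L]$. I would also remark that the two parts are really one computation: on a symplectic $X$ one has $c_1(\fs_\omega)^2=2\chi(X)+3\sigma(X)$, so $I(\tilde A)=d(\fs_\omega+\PD\tilde A)$, and the first part is the case $\fs=(\fs_\omega+\PD A)|_{X_0}$ of the second.
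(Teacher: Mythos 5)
Your proof is correct, and its skeleton matches the paper's: identify the coset of lifts, expand the index difference as $v\cdot(v+2\tilde A-K)$ (resp.\ $v\cdot(v+c_1(\tilde\fs))$) for $v$ in the ambiguity group, and kill the cross terms. Where you diverge is in how the cross terms are killed. The paper identifies the ambiguity as the image of the Mayer--Vietoris map $H^1(UT^*L)\to H^2(X)$ and argues purely by de Rham supports: such a $v$ admits representatives supported in a collar of $UT^*L$ on either side of the splitting (whence $v\smile v=0$), while $\tilde A$, $K$, and $c_1(\tilde\fs)$ admit representatives supported away from that collar; it explicitly remarks that identifying $v$ with a multiple of $\PD[L]$ is ``not needed.'' Your argument is built on exactly that identification: you realize the ambiguity as $\ZZ[L]$ (resp.\ $\ZZ\,\PD[L]$ via the Thom class of the normal bundle) and then invoke $[L]\cdot[L]=0$, disjointness from $L$ of a closed representative of $A$ in $X_0$, triviality of $TX|_L$ for a Lagrangian torus, and injectivity of $H^2(L)\to H^2(L\times S^1)$. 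Your route is more concrete and makes visible which geometric hypotheses enter where --- in particular, your care about the integral (not merely mod $2$) vanishing of $\langle c_1(\tilde\fs),[L]\rangle$ in the smooth case addresses a point the paper disposes of in one line --- whereas the paper's support argument is marginally more economical in never needing to identify the Mayer--Vietoris classes at all. Both are complete proofs.
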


\begin{proof}
The decomposition $X=X_0\cup_\partial\nN$ induces the cohomological Mayer--Vietoris sequence
$$\cdots\to H^1(UT^* L)\stackrel{\mM\vV}{\to} H^2(X)\stackrel{(i_{X_0}^*,i_{\nN}^*)}{\longrightarrow} H^2(X_0)\oplus H^2(\nN)\stackrel{i_{\partial X_0}^*-i_{\partial\nN}^*}{\longrightarrow} H^2(UT^* L)\to\cdots$$
We are only considering the classes $(A,0)\in H_2(X_0,UT^* L)\oplus H_2(\nN,\partial\nN)$ whose boundary is zero in $H_1(UT^* L)$, i.e. live in the kernel of the induced restriction map $i_{\partial X_0}^*-i_{\partial\nN}^*$, so all such classes $A\in H_2(X_0,UT^* L)$ come from the restriction map $\text{Ker}(i_{\nN}^*)\subset H^2(X)\to H^2(X_0)$. But there might not be a unique lift $\tilde A\in H^2(X)$ of $A$, the ambiguity coming from
$$\text{Ker}\big(i_{X_0}^*:\text{Ker}(i_{\nN}^*)\to H^2(X_0)\big)=\text{Im}(\mM\vV)\subset H^2(X)$$
Although not needed, these Mayer--Vietoris classes are dual to multiples of $[L]$ in $H_2(X)$.

In what follows we suppress Poincar\'e--Lefschetz duality and use $K$ to denote both the canonical bundle over $X$ (determined by $\omega$) and its 1st Chern class. Given a lift $\tilde A\in H_2(X)\cong H^2(X)$ we claim that the ECH index satisfies $I(\tilde A)=I(\tilde A+v)$ for every $v\in\text{Im}(\mM\vV)$, which is equivalent to $v\cdot(v+2\tilde A-K)=0$. To show this we use de Rham cohomology, noting that the cup product operation $H^2(X)\otimes H^2(X)\to H^4(X)\cong\ZZ$ vanishes on torsion elements. The support of $v\in H^2(X;\RR)$ is contained in a small tubular neighborhood of $UT^* L\subset X$, so $v\wedge v=0$ because we can take two different representative 2-forms for $v$, one having support in a collar neighborhood of $UT^* L\subset X_0$ and the other having support in a collar neighborhood of $UT^* L\subset\nN$.\footnote{Here is a way to see that $v\cdot v=0$ without using de Rham cohomology. The map $\mM\vV$ is not a graded ring homomorphism, but it is the composition of the suspension isomorphism $H^1(UT^* L)\to H^2\big(\Sigma(UT^* L)\big)$ with the graded ring homomorphism $\mM\vV^*:H^2\big(\Sigma(UT^* L)\big)\to H^2(X)$ induced by the map $X\hookrightarrow CX_0\cup C\nN\twoheadrightarrow\Sigma(UT^* L)$ \cite{MO:cupMV}. Since $v$ is in the image of $\mM\vV$ it can be written as $\mM\vV^*(v^*)$ for some $v^*\in H^2\big(\Sigma(UT^* L)\big)$, and so $v\smile v=\mM\vV^*(v^*\smile v^*)=\mM\vV^*(0)=0$ because the cup product operation is trivial on suspension spaces.} Similarly, $v\cdot\tilde A=v\cdot K=0$ because $\tilde A$ and $K$ have representatives which are supported in $X_0$ and away from $UT^* L$.

We can translate the previous paragraph in terms of spin-c structures, as follows. The symplectic form induces the canonical $H_2(X)$-equivariant isomorphism $\Spinc(X)\cong H^2(X)$, such that the restriction map $\Spinc(X)\to\Spinc(X_0)$ sends $v\in H^2(X)$ to the spin-c structure on $X_0$ associated with the relative class $\PD i_{X_0}^*(v)\in H_2(X_0,UT^* L)$. Under this translation we see that for each spin-c structure $\fs$ on $X_0$ whose restriction to the boundary is trivial, there is at least one lift $\tilde\fs\in\Spinc(X)$ but also a lift $\tilde\fs+v\in\Spinc(X)$ for each $v\in\text{Im}(\mM\vV)$.

We can also discuss non-symplectic 4-manifolds, for which there is no canonical isomorphism $\Spinc(X)\cong H^2(X)$. We still see that for each spin-c structure $\fs$ on $X_0$ whose restriction to the boundary is trivial, there is at least one lift $\tilde\fs\in\Spinc(X)$ but also a lift $\tilde\fs+v\in\Spinc(X)$ for each $v\in\text{Im}(\mM\vV)$, and the Seiberg--Witten index satisfies $d(\tilde\fs)=d(\tilde\fs+v)$ which is equivalent to $v\cdot\big(v+c_1(\tilde\fs)\big)=0$.
\end{proof}

The symplectic cobordism $(\nN,\omega)$ with relative class $B\in H_2(\nN,UT^* L)\cong H^2(\nN)\cong H^2(L)\cong\ZZ$ defines another ECH cobordism map $ECH_*(\nN,\omega,B)$ using Seiberg--Witten theory \cite{Hutchings:fieldtheory}. We assume $\partial B=\partial A=0$ in order to compose the ECH cobordism maps induced by $X=\nN\circ X_0$, and so $B=0$ because the differential $\partial:H_2(\nN,UT^* L)\to H_1(UT^* L)$ is injective (as seen using the homological long exact sequence). Since $(\nN,\omega)$ is a strong symplectic filling of $(T^3,\xi_0)$, the cobordism map $ECH_*(\nN,\omega,0)$ sends the contact invariants to each other (see \cite{Echeverria:naturality, Hutchings:fieldtheory}) and hence must preserve gradings. So the graded map $ECH_{2k}(\nN,\omega,0):ECH_{2k}(T^3,\xi_0)\to ECH_{2k}(\varnothing,0)$ is trivial for $k\ne0$, and
$$\Phi_\nN:=ECH_0(\nN,\omega,0):ECH_0(T^3,\xi_0,0)\to\ZZ/2$$
sends the nonzero contact invariant $[\varnothing]$ to $1$ (the contact invariant of the empty 3-manifold). In light of the following lemma, the contact invariant is the basis element $\bar\theta\in ECH_0(T^3,\xi_0,0)$.

\begin{lemma}
\label{lem:projection}
The cobordism map\footnote{This cobordism map counts reducible solutions between reducible monopoles. Luckily $b^2_+(\nN)=0$, otherwise \cite{KM:book}*{Proposition 27.2.4} would imply that such (perturbed) moduli spaces are empty.} $\Phi_\nN:\Hto_0(T^3,\fs_0)\to\ZZ$ on monopole Floer homology can be identified with the induced map $H_2(T^3)\to H_2(\nN)$ under the inclusion $\partial\nN\hookrightarrow\nN$, which is the projection $\ZZ\langle x,y,\bar\theta\rangle\to\ZZ\langle\bar\theta\rangle$.
\end{lemma}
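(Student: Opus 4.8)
The plan is to compute the purely topological map $i_*\colon H_2(T^3)\to H_2(\nN)$ induced by $\partial\nN\hookrightarrow\nN$ first, and then to match $\Phi_\nN$ with it basis-element by basis-element. Since $L\cdot L=0$, the disk bundle $\nN$ is the trivial bundle $T^2\times D^2$ with $\partial\nN=T^2\times S^1=T^3$, so $H_2(\nN)\cong H_2(T^2)=\ZZ\langle[L]\rangle$ and the $\theta$-circle is identified with $\partial D^2$. The class $\bar\theta$ is Poincar\'e dual to $-[S^1_\theta]$, hence represented by the $(x,y)$-torus $T^2\times\{*\}$, which is a section of $\partial\nN\to L$ and therefore homologous in $\nN$ to the zero section $[L]$; on the other hand $x$ and $y$ are represented by tori containing the $S^1_\theta=\partial D^2$ factor, which bound solid tori $S^1\times D^2$ in $\nN$. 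Thus $i_*$ kills $x,y$ and sends $\bar\theta$ to the generator, i.e. it is exactly the asserted projection. It therefore suffices to prove that the monopole cobordism map $\Phi_\nN\colon\Hto_0(T^3,\fs_0)\cong H_2(T^3)\to\ZZ\cong H_2(\nN)$ agrees with $i_*$ on the basis $\lbrace x,y,\bar\theta\rbrace$.

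The value on $\bar\theta$ is already in hand: under the identification of $\bar\theta$ with the contact invariant $[\varnothing]$, the filling cobordism map carries $[\varnothing]$ to the contact invariant of the empty manifold, which is the generator of $\ZZ\cong H_2(\nN)$ (see the discussion preceding this lemma, together with naturality of contact invariants in \cite{Echeverria:naturality,Hutchings:fieldtheory}). Hence $\Phi_\nN(\bar\theta)=i_*(\bar\theta)$ up to the sign fixed by our orientation conventions, and it remains only to show $\Phi_\nN(x)=\Phi_\nN(y)=0$.

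For this I would exploit two symmetries. The base isometries $(x,y)\mapsto(-x,y)$ and $(x,y)\mapsto(x,-y)$ of $L$ preserve the flat metric, so their cotangent (symplectic) lifts are symplectomorphisms $\widetilde f$ of the disk bundle $\nN$ restricting on $\partial\nN=T^3$ to the contactomorphisms $f\colon(x,y,\theta)\mapsto(-x,y,\pi-\theta)$ and $(x,y,\theta)\mapsto(x,-y,-\theta)$, each of which preserves $\lambda_0$ and is orientation-preserving on $T^3$ (its differential has determinant $+1$), hence fixes $\fs_0$. By Proposition~\ref{prop:ECHSWF} the identification $\Hto_0(T^3,\fs_0)\cong H_2(T^3)$ is natural for such $f$, and comparing oriented cycle representatives gives $f_*x=-x,\ f_*y=y,\ f_*\bar\theta=-\bar\theta$ for the first map and $f_*x=x,\ f_*y=-y,\ f_*\bar\theta=-\bar\theta$ for the second. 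Functoriality of the monopole cobordism map under the diffeomorphism $\widetilde f$ of the cobordism (see \cite{KM:book}) gives $\Phi_\nN\circ f_*=\epsilon(\widetilde f)\,\Phi_\nN$, where $\epsilon(\widetilde f)=\pm1$ records the action of $\widetilde f$ on the homology orientation of $\nN$. Because $b^2_+(\nN)=0$, this orientation reduces to one of $\det H^1(\nN;\RR)$, so $\epsilon(\widetilde f)=\det\big(\bar f^*|_{H^1(L;\RR)}\big)=-1$ in both cases (equivalently $\widetilde f_*[L]=-[L]$). Applying $\Phi_\nN\circ f_*=-\Phi_\nN$ to the class fixed by each symmetry---$x$ for the first map and $y$ for the second---yields $\Phi_\nN(x)=-\Phi_\nN(x)$ and $\Phi_\nN(y)=-\Phi_\nN(y)$, so $2\Phi_\nN(x)=2\Phi_\nN(y)=0$ and hence $\Phi_\nN(x)=\Phi_\nN(y)=0$ since $\ZZ$ is torsion-free.

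I expect the main obstacle to be the sign bookkeeping, namely justifying the functoriality formula $\Phi_\nN\circ f_*=\epsilon(\widetilde f)\,\Phi_\nN$ with $\epsilon$ equal to the homology-orientation sign and verifying $\epsilon(\widetilde f)=-1$; here the hypothesis $b^2_+(\nN)=0$ (which, per the footnote, already forces $\Phi_\nN$ to be a reducible count) is exactly what collapses the homology orientation to $\det H^1$ and makes $\epsilon$ the determinant of the base action. It is worth emphasizing that working over $\ZZ$ is essential: over $\ZZ/2$ the relations $2\Phi_\nN(x)=2\Phi_\nN(y)=0$ carry no information, so the vanishing of $\Phi_\nN$ on $x,y$ genuinely uses the integral refinement supplied by Proposition~\ref{prop:ECHSWF}.
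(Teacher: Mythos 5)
Your route is genuinely different from the paper's. The paper computes $\Phi_\nN$ head-on: the unperturbed moduli spaces on $T^3$ and on $\nN$ are tori of flat connections $\TT^3$ and $\TT^2$, the group $\Hto_0(T^3,\fs_0)$ is modeled on $\crit_1$ of a Morse function on $\TT^3$, and the cobordism map becomes the intersection number of a resulting $S^1$-cycle of flowlines with the subtorus $\TT^2\subset\TT^3$ of connections extending over $\nN$; this intersection pairing is then identified with $H_2(T^3)\to H_2(\nN)$. You instead use only formal properties: naturality of the contact invariant under fillings, and equivariance of the cobordism map under the two cotangent-lifted reflections. Your symmetry argument for $\Phi_\nN(x)=\Phi_\nN(y)=0$ is sound: the reflections do lift to symplectomorphisms of $\nN$ fixing the relevant spin-c structure, they act on $H_2(T^3)$ with the signs you state, and the orientation sign is indeed $-1$ --- though note that the cobordism homology orientation of $\nN$ also carries a $\det H^1(T^3;\RR)$ factor (see the Appendix), on which these maps act with determinant $+1$, so your computation of $\epsilon(\widetilde f)$ survives. (Also, the class fixed by $(x,y)\mapsto(-x,y)$ is $y$, not $x$, and vice versa; since you use both reflections this is only a labeling slip.)

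The one genuine gap is your treatment of $\bar\theta$. You take ``the identification of $\bar\theta$ with the contact invariant $[\varnothing]$'' as an input, but in the paper that identification is stated as a \emph{consequence} of this lemma (``In light of the following lemma, the contact invariant is the basis element $\bar\theta$''), so as written your argument is circular. What is available independently is only that $\Phi_\nN([\varnothing])=1$, since $\nN$ is a strong filling and the cobordism map carries contact invariants to contact invariants. The fix is easy: write $[\varnothing]=\alpha x+\beta y+\gamma\bar\theta$; your vanishing $\Phi_\nN(x)=\Phi_\nN(y)=0$ turns $\Phi_\nN([\varnothing])=1$ into $\gamma\,\Phi_\nN(\bar\theta)=1$, forcing $\Phi_\nN(\bar\theta)=\pm1$, so $\Phi_\nN$ is the projection onto $\ZZ\langle\bar\theta\rangle$ up to an overall sign. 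That sign is absorbed by the choice of generator of $H_2(\nN)\cong\ZZ$ (and is invisible mod $2$), which is all the lemma's ``can be identified with'' requires. With that reorganization your proof goes through.
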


\begin{proof}
The unperturbed (blown-down) Seiberg--Witten moduli spaces on $\partial\nN$ and $\nN$ consist solely of reducible configurations, with identifications $M(T^3;\fs_0)\cong\TT^3$ and $M(T^2\times D^2;\fs_\omega)\cong\TT^2$ because all $U(1)$-connections are flat \cite{KM:book}*{Proposition 22.7.1, Lemma 27.2.1}. These identifications are in terms of holonomy, so the restriction map $M(T^2\times D^2;\fs_\omega)\to M(T^3;\fs_0)$ is the natural inclusion $\TT^2\to\TT^2\times\lbrace0\rbrace\subset\TT^3$. Following \cite{KM:book}*{\S37.2}, to compute the Floer groups we slightly perturb the (blown-up) Seiberg--Witten equations using a self-indexing Morse function $h:\TT^3\to\RR$ so that
$$\Hto_0(T^3,\fs_0)\cong\ZZ\cdot\crit_1(h)\cong H_1(\TT^3)$$
We now claim that $\Phi_\nN$ is precisely the map $H_1(\TT^3)\to\ZZ$ given by $\eta\mapsto\eta\cdot[\TT^2\times\lbrace0\rbrace]$. It suffices to prove this claim, because this map is the induced map $H_2(T^3)\to H_2(\nN)$ under the identifications $H_1(\TT^3)\cong H^1(T^3)\cong H_2(T^3)$.

On the chain level, $\Phi_\nN$ counts elements of the perturbed (blown-up) Seiberg--Witten moduli space over $\overline\nN$ which extend the given monopoles corresponding to $\crit_1(h)$. The Morse function perturbation over $T^3$ is extended as a perturbation over $\overline\nN$ using a cutoff function that is supported on the end $[0,\infty)\times T^3$ and equal to 1 on $[1,\infty)\times T^3$, for which the perturbed Seiberg--Witten equations on the end are a gradient-flow. So a monopole $p\in\crit_1(h)$ extends as a gradient-flowline over $[0,\infty)\times T^3$ to an unperturbed monopole on $\partial\nN$, and there is an $S^1$ worth of such flowlines, hence a cycle $\gamma_p\subset\TT^3$ of unperturbed monopoles on $\partial\nN$. A flowline extends over $\nN$ if and only if $[\gamma_p]\cdot[\TT^2\times\lbrace0\rbrace]\ne0$, so the claim is proved. 
\end{proof}

\begin{remark}
Although a description of $\Phi_\nN$ in terms of Seiberg--Witten theory was provided in Lemma~\ref{lem:projection}, here is an indirect description without using Seiberg--Witten theory. Since the nonzero contact invariant is represented by the empty orbit set $\varnothing$ it must generate a summand of homology, $ECH_0(T^3,\xi_0,0)\cong\ZZ/2\langle\Theta_1,\Theta_2,[\varnothing]\rangle$. Thus, $\Phi_\nN:(\ZZ/2)^3\to\ZZ/2$ is some homomorphism determined by
\begin{align*}
(0,0,[\varnothing])&\mapsto1[\varnothing]\\
(\Theta_1,0,0)&\mapsto r_1[\varnothing]\\
(0,\Theta_2,0)&\mapsto r_2[\varnothing]
\end{align*}
So under the change of basis $(\ZZ/2)^3\cong\ZZ/2\langle\Theta_1-r_1[\varnothing],\Theta_2-r_2[\varnothing],[\varnothing]\rangle$ we may identify $\Phi_\nN$ with the projection map onto the 3rd coordinate. Note that $r_1$ and $r_2$ need not be zero: There could exist pseudoholomorphic curves $C$ in the completion $\overline\nN\cong T^* L$ which are asymptotic to $\Theta_i$ with positive energy $\int_C\omega=\int_{\Theta_i}\lambda_0>0$ and simultaneously satisfy $[C]=0\in H_2(\nN,UT^* L)$, such as a cylinder with two positive ends. With regards to Lemma~\ref{lem:projection}, we can use Taubes' isomorphism~\eqref{eqn:Taubes} to pick $\Theta_1$ and $\Theta_2$ in such a way that $r_1=r_2=0$.
\end{remark}

\begin{prop}
Given a relative class $A\in H_2(X_0,UT^* L)$ whose boundary is zero in $H_1(UT^* L)$, the ECH index $I(\tilde A)$ of a lift $\tilde A\in H_2(X)$ is the unique integer for which $g(A,I(\tilde A))=0$.
\end{prop}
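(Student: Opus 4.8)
The plan is to reduce the assertion to the single identity $g(A,0)=I(\tilde A)$ and then to compute the right side directly from the absolute ECH grading. By Theorem~\ref{thm:class} we have $\Phi(A,I)=U^{I/2}\Phi(A,0)$ with $U$ of degree $-2$, so $g(A,I)=g(A,0)-I$ for each admissible $I$. Reading this as a function of $I\in\ZZ$ it is a strictly decreasing bijection, so $g(A,I)=0$ has the unique solution $I=g(A,0)$; thus it suffices to prove $g(A,0)=I(\tilde A)$, a quantity independent of the lift by Lemma~\ref{lem:lift}.

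First I would choose a convenient representative of $\tilde A$. Because $\partial A=0\in H_1(UT^*L)$, the long exact sequence of the pair $(X_0,UT^*L)$ places $A$ in the image of $H_2(X_0)\to H_2(X_0,UT^*L)$, so $A$ is represented by a closed surface $S$ in the interior of $X_0$ and $\tilde A:=[S]\in H_2(X)$ is a lift. Regarded in the completion $\overline{X_0}$, the class $[S]$ is asymptotic to the empty orbit set $\Theta=\varnothing$ at the $T^3$-end, and its ECH index there equals the closed-manifold index $I(\tilde A)$, since $S$ lies in the interior and the index depends only on the Chern and self-intersection data of the class (equivalently, cap off with the zero class of the filling $\nN$ and use additivity of the ECH index: $I(\tilde A)=I_{X_0}([S])+I_\nN(0)=I_{X_0}([S])$).

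Next I would establish the grading relation
\[
\gr(\Theta)+I_{X_0}(Z_0)=g(A,0)
\]
for every $Z_0\in H_2(X_0,\varnothing,\Theta)$ projecting to $A$. The inputs are the additivity of the ECH index under concatenation of relative classes along $T^3$, together with $c_1(\xi_0)=0$ and $\Gamma=0$, which force the symplectization index of any $W\in H_2(T^3,\Theta,\Theta')$ to equal $\gr(\Theta)-\gr(\Theta')$ independently of $W$. Two classes projecting to $A$ differ by such a $W$, so the left side is independent of $(Z_0,\Theta)$; its value is $g(A,0)$ because the index-zero classes summed in~\eqref{eqn:chain} land in grading $g(A,0)$ by Theorem~\ref{thm:class}. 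Applying this to $Z_0=[S]$, $\Theta=\varnothing$, and using $\gr(\varnothing)=0$ (the empty set is the contact invariant, lying in $ECH_0$), yields $g(A,0)=I_{X_0}([S])=I(\tilde A)$.

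These displays give $g(A,I(\tilde A))=g(A,0)-I(\tilde A)=0$, with uniqueness as noted. I expect the crux to be the middle paragraph: showing cleanly that $\gr(\Theta)+I_{X_0}(Z_0)$ is a well-defined invariant of $A$ equal to $g(A,0)$, i.e. that the absolute grading from Theorem~\ref{thm:class} is computed by the ECH index of the asymptotic orbit set. This rests on the additivity of the ECH index and, crucially, on $c_1(\xi_0)=0$, which makes the relative $\ZZ$-grading on $ECH_*(T^3,\xi_0,0)$ genuinely integral, so that no divisibility ambiguity enters the identification of the symplectization index with a difference of gradings. Finally the evenness of $I(\tilde A)$, needed for it to be a legitimate index value, follows from the Wu formula.
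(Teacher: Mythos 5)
Your argument is correct, but it proves the proposition by a genuinely different mechanism than the paper does. The paper never touches the ECH index directly: it translates everything into Seiberg--Witten theory, writes $I(\tilde A)=d(\fs_\omega+\tilde A)$, and applies additivity of the \emph{SW} index to the decomposition $X=X_0\circ\nN$. Since the $X_0$-side moduli spaces contributing to $\Phi(A,I(\tilde A))$ already have dimension $d(\fs_\omega+\tilde A)$, this forces $\dim\fM(\varnothing,\nN,\fc_\Theta;\fs_\omega)=0$ for every contributing monopole $\fc_\Theta$; computing relative gradings on $T^3$ via the $\nN$-cobordism then gives $|\fc_\Theta|=|\fc_\varnothing|=0$, the last equality because $\fc_\varnothing$ is the contact invariant. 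You instead stay entirely on the ECH side: you observe that $I(Z_0)+|\Theta|$ is an invariant of $A$ (by additivity of the ECH index under concatenation with symplectization classes, using $c_1(\xi_0)=0$ and $\Gamma=0$ to kill the index ambiguity), evaluate it once on the index-$0$ classes appearing in~\eqref{eqn:chain} to get $g(A,0)$, and once on a closed-surface representative $S$ of $A$ (which exists since $\partial A=0$) asymptotic to $\varnothing$ to get $I(\tilde A)+|\varnothing|=I(\tilde A)$. Both proofs ultimately rest on the same two pillars --- an index-additivity statement and the fact that the empty orbit set / contact element sits in grading $0$ --- but yours buys a proof that never invokes Taubes' isomorphism or the computation of $\Phi_\nN$, while the paper's version has the advantage of being phrased in exactly the gauge-theoretic language used for the surgery formulas in Section~\ref{Surgeries} and of sidestepping any discussion of how the $2$-plane-field grading interacts with the ECH index on the cobordism $\overline{X_0}$ (you correctly reduce that interaction to the symplectization case, which is the only case the paper asserts). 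Two small points worth making explicit in your write-up: the identification $I_{X_0}([S])=I(\tilde A)$ should note that $[S]\in H_2(X)$ really is a lift of $A$ in the sense of Lemma~\ref{lem:lift} (it is, since $H_2(X_0)\to H_2(X)\to H_2(X,\nN)\cong H_2(X_0,UT^*L)$ is the natural map), and your uniqueness argument implicitly treats $g(A,I)$ as defined by the grading formula even when the chain~\eqref{eqn:chain} vanishes, which is consistent with how Theorem~\ref{thm:class} is used but deserves a sentence.
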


\begin{proof}
Uniqueness follows from the description in Theorem~\ref{thm:class} of $\Phi(A,I)$ in terms of the U-map, and the independence of the lift is due to Lemma~\ref{lem:lift}. As both $\Phi_N$ and $\Phi(A,I(\tilde A))$ may be defined by suitable counts of Seiberg--Witten solutions on $\overline\nN$ and $\overline{X_0}$, we look at the index of each Seiberg--Witten moduli space. Using the isomorphisms $\Spinc(X)\cong H_2(X)$ and $\Spinc(X_0)\cong H_2(X_0,UT^* L)$, we have $I(\tilde A)=d(\fs_\omega+\tilde A)$ for the various choices of lifts of $\fs_\omega+A$. Then by the additivity of index (see \cite{KM:book}*{\S24.4}) applied to the composition of cobordisms $X=X_0\circ\nN$,
$$d(\fs_\omega+\tilde A)=\dim\fM(\varnothing,\nN,\fc_\Theta;\fs_\omega)+d(\fs_\omega+\tilde A)$$
for each monopole $\fc_\Theta\in\fM(T^3,\fs_0)$ that contributes to $\Phi(A,I(\tilde A))$. Such a monopole thus satisfies the constraint $\dim\fM(\varnothing,\nN,\fc_\Theta;\fs_\omega)=0$ for any class $A$, including the monopole $\fc_\varnothing$ associated with the class $A=0$. Then
$$|\fc_\Theta|-|\fc_\varnothing|=\gr(\fc_\varnothing,\fc_\Theta)=\dim\fM(\varnothing,\nN,\fc_\varnothing;\fs_\omega)-\dim\fM(\varnothing,\nN,\fc_\Theta;\fs_\omega)=0$$
and hence
$$g(A,I(\tilde A))=|\fc_\Theta|=|\fc_\varnothing|=0$$
The last equality is due to the fact that $\fc_\varnothing$ defines the contact invariant (see Section~\ref{Taubes' isomorphisms}).
\end{proof}

\begin{definition}
\label{defn:Gr}
The ECH invariant of the Lagrangian torus $L\subset(X,\omega)$ in a minimal symplectic 4-manifold, with respect to a relative class $A\in H_2(X_0,UT^* L)$ satisfying $\partial A=0$, is the element
$$Gr_L(A):=\Phi(A,I(\tilde A))\in ECH_0(T^3,\xi_0,0)$$
from Theorem~\ref{thm:class}.
\end{definition}

By the composition law \cite{Hutchings:fieldtheory} the image of $Gr_L(A)$ under $\Phi_\nN$ in $ECH_0(\varnothing,0,0)\cong\ZZ/2$ is the sum of Gromov invariants
$$Gr_X(A):=\sum_{\tilde A}Gr_{X,\omega}(\tilde A)$$
where $\tilde A\in H_2(X)$ indexes over those classes such that $A|_{X_0}=A$ and $A|_\nN=0$. Since $\Phi_\nN$ is the projection map under the identification in Lemma~\ref{lem:projection}, we have
\begin{equation}
\label{eqn:compositionGr}
Gr_L(A)=\big(a,\;b,\;Gr_X(A)\big)\in(\ZZ/2)^3
\end{equation}
We would hope that the remaining pair $(a,b)$ contains new information about $L\subset X$, but we will see momentarily that it simply repackages the Gromov (or Seiberg--Witten) invariants of the surgeries when $\nN$ is attached differently to $X-\nN$ (via the two generating loops of $L$).

%%%%%%%%%%%%%%%%%%%%%%%%%%%%%%%%%%%%%%%%%%%%%%%%%%%%%%%%%%
%%%%%%%%%%%%%%%%%%%%%%%%%%%%%%%%%%%%%%%%%%%%%%%%%%%%%%%%%%
\section{Surgeries}
\label{Surgeries}

In the following statements we use integer coefficients when discussing Seiberg--Witten theory, but we suppress the homology orientations. All homology orientations satisfy a composition law and are derived from a common homology orientation of $X_0$, and we relegate the discussion of them to Appendix~\ref{appendix}.

Since the methodology presented here does not require the symplectic form, we can consider the more general setup: Let $X$ now denote a connected smooth 4-manifold (not necessarily symplectic) with $b^2_+(X)>0$, and let $L$ denote a smoothly embedded 2-torus with trivial self-intersection number. Instead of (relative) homology classes we work with spin-c structures: Given $\tilde\fs\in\Spinc(X)$ we consider the restricted spin-c structures $\fs=\tilde\fs|_{X_0}\in\Spinc(X_0)$ satisfying $\fs|_{\partial X_0}=\fs_0$, and we note that there is a unique spin-c structure (denoted $\fs_\omega$ by abuse of notation) on $\nN$ that extends $\fs_0$ on its boundary. We have already computed the induced monopole Floer cobordism map $\Phi_{(\nN,\,\fs_\omega)}$ in Lemma~\ref{lem:projection}, and similarly the cobordism $X_0$ defines an element of monopole Floer homology
$$SW_L(\fs)\in \Hto_0(T^3,\fs_0)$$
by suitably counting Seiberg--Witten solutions on $\overline{X_0}$ that have index $d(\tilde\fs)$ \cite{KM:book}*{Theorem 3.4.4}. When $X$ is symplectic (and minimal) and $L$ is Lagrangian, this element modulo 2 is the corresponding ECH invariant (as mentioned in the proof of Theorem~\ref{thm:class}).

Given an orientation-preserving diffeomorphism $f\in\Diff_+(T^3)$ and noting that $\partial X_0=-\partial\nN$, let
$$X_f:=X_0\cup_f\nN$$
denote its logarithmic transformation, the torus surgery of $L\subset X$ induced by $f$. By the composition law in \cite{KM:book} we can then compute its Seiberg--Witten invariants\footnote{Strictly speaking, when $b^2_+(X)=1$ the invariants depend on a choice of chamber, but as shown in \cite{KM:book}*{\S27.5} the gluing formula picks out the chamber (dependent on $SW_L(\fs)$ and $\Phi_\nN$).}
\begin{equation}
\label{eqn:composition}
\sum_{\tilde\fs}SW_{X_f}(\tilde\fs)=\Phi_\nN\circ f_*\circ SW_L(\fs)
\end{equation}
where $\tilde\fs\in\Spinc(X_f)$ indexes over the lifts of $(\fs,\fs_\omega)\in\Spinc(X_0)\oplus\Spinc(\nN)$.\footnote{Note that by Proposition~\ref{prop:ECHSWF}, $SW_{X_f}(\fs)=0$ whenever $\fs|_\nN\ne0$.} We denote this sum by
$$SW_{X_f}(\fs):=\sum_{\tilde\fs}SW_{X_f}(\tilde\fs)$$
In other words, $\Phi_\nN\circ f_*:\ZZ^3\to\ZZ$ are various (integral) linear functionals which send the element
\begin{equation}
\label{eqn:compositionSW}
SW_L(\fs)=\big(a,\;b,\;SW_X(\fs)\big)\in\ZZ^3
\end{equation}
to the various Seiberg--Witten invariants of torus surgeries. We will now pin down this pair $(a,b)$ using enough choices of $f$, and when $(X,L)$ is our symplectic-Lagrangian pair we will use only Luttinger surgeries (of symplectic manifolds) instead of all logarithmic transformations (of smooth manifolds). In this regard, a Luttinger surgery $X\mapsto X_f$ preserves symplecticity and minimality.

\begin{theorem}
\label{thm:main}
Given an integer $p\in\ZZ$ and a loop $\gamma=rx+sy\in H_1(T^2)$ with $r,s\in\ZZ$, let $f_{p,r,s}:T^2\times S^1\to T^2\times S^1$ be an orientation-preserving diffeomorphism determined by $f_*\bar\theta=p\bar\theta-\gamma$. Then for $\fs\in\Spinc(X_0)$ such that $\fs|_{T^3}=\fs_0$,
$$SW_L(\fs)=\left(SW_{X_{f_{0,1,0}}}(\fs),\;SW_{X_{f_{0,0,1}}}(\fs),\;SW_X(\fs)\right)\in\ZZ^3\cong\Hto_0(T^3,\fs_0)$$
where in the $b^2_+(X)=1$ case the invariants are computed in corresponding chambers. If $X$ is symplectic and $L$ is Lagrangian, then
$$Gr_L(A)=\left(Gr_{X_{f_{1,1,0}}}(A)-Gr_X(A),\;Gr_{X_{f_{1,0,1}}}(A)-Gr_X(A),\;Gr_X(A)\right)\in(\ZZ/2)^3\cong ECH_0(T^3,\xi_0,0)$$
where $A\in H_2(X_0,\partial X_0)$ corresponds to $\fs\in\Spinc(X_0)$ via the symplectic form.
\end{theorem}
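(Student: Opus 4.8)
The plan is to treat $SW_L(\fs)$ as an unknown vector in $\Hto_0(T^3,\fs_0)\cong H_2(T^3)=\ZZ\langle x,y,\bar\theta\rangle$ and to recover each of its three coordinates by evaluating suitable linear functionals. The only inputs needed are the composition law~\eqref{eqn:composition}, which expresses every surgery invariant as $SW_{X_f}(\fs)=\Phi_\nN\circ f_*\circ SW_L(\fs)$, together with Lemma~\ref{lem:projection}, which identifies $\Phi_\nN$ with the projection $H_2(T^3)\to\ZZ\langle\bar\theta\rangle$ onto the $\bar\theta$-coordinate $e_{\bar\theta}^*$. Thus for any orientation-preserving $f$ the integer $SW_{X_f}(\fs)$ equals the $\bar\theta$-coordinate of $f_*\,SW_L(\fs)$, and computing $SW_{X_f}$ for three diffeomorphisms whose functionals $e_{\bar\theta}^*\circ f_*$ span the dual of $H_2(T^3)$ pins down $SW_L(\fs)$ completely. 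Throughout I use $f_*^{H_2}=\big((f_*^{H_1})^{-1}\big)^{T}$, consistent with the explicit $H_2$-action recorded in the Remark of Section~\ref{Floer homologies of 3-tori}.

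First I would record the three functionals for the smooth statement. Taking $f=f_{1,0,0}=\mathrm{id}$ (so $X_f=X$) gives $e_{\bar\theta}^*$ and recovers the third coordinate $SW_X(\fs)$, which is exactly~\eqref{eqn:compositionSW}. Then I would take $f_{0,1,0}$ and $f_{0,0,1}$ to be the ``rotations'' exchanging the $\theta$-circle with the $x$- and $y$-circle, so that on $H_2$ one has $(f_{0,1,0})_*\colon x\mapsto\bar\theta$ and $(f_{0,0,1})_*\colon y\mapsto\bar\theta$ (with $\bar\theta\mapsto-x$, $-y$ respectively); hence $\Phi_\nN\circ(f_{0,1,0})_*=e_x^*$ and $\Phi_\nN\circ(f_{0,0,1})_*=e_y^*$. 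Feeding these into~\eqref{eqn:composition} reads off the first two coordinates of $SW_L(\fs)$ as $SW_{X_{f_{0,1,0}}}(\fs)$ and $SW_{X_{f_{0,0,1}}}(\fs)$, giving the first displayed formula. When $b^2_+(X)=1$ the gluing formula selects the chamber (as in the footnote to~\eqref{eqn:composition}), so each invariant is read in its corresponding chamber; consistency under regluing $\nN$ is automatic, since any self-diffeomorphism $h$ of $T^3$ extending over $\nN$ satisfies $\Phi_\nN\circ h_*=\Phi_\nN$.

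For the symplectic statement the crucial point is that the $p=0$ rotations do not preserve symplecticity, so $Gr_{X_{f_{0,1,0}}}$ need not be defined; instead I would use only the Luttinger surgeries $f_{1,1,0}$ and $f_{1,0,1}$, which preserve symplecticity and minimality and therefore admit Taubes' equality $Gr=SW$ (Section~\ref{Taubes' isomorphisms}). Their $H_2$-actions are $x\mapsto x+\bar\theta$ and $y\mapsto y+\bar\theta$ with $\bar\theta\mapsto\bar\theta$, so now $\Phi_\nN\circ(f_{1,1,0})_*=e_x^*+e_{\bar\theta}^*$ and $\Phi_\nN\circ(f_{1,0,1})_*=e_y^*+e_{\bar\theta}^*$. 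Writing $Gr_L(A)=(a,b,Gr_X(A))$ as in~\eqref{eqn:compositionGr}, the composition law then computes $Gr_{X_{f_{1,1,0}}}(A)=a+Gr_X(A)$ and $Gr_{X_{f_{1,0,1}}}(A)=b+Gr_X(A)$; solving yields $a=Gr_{X_{f_{1,1,0}}}(A)-Gr_X(A)$ and $b=Gr_{X_{f_{1,0,1}}}(A)-Gr_X(A)$, which is the second displayed formula. Here I invoke $Gr_{X,\omega}=SW_X\bmod 2$ to match the Gromov computation with the monopole computation of $SW_L(\fs)$, using that $A$ and $\fs$ correspond under the symplectic form.

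I expect the main obstacle to be the asymmetry between the two statements rather than any single estimate: because only the $p=1$ surgeries remain inside the symplectic (and minimal) category, the symplectic formula can access the coordinates $a,b$ only in the combinations $a+Gr_X$ and $b+Gr_X$, which forces the subtraction by $Gr_X(A)$ that is absent in the purely smooth case. The remaining work is organizational — verifying that the chosen representatives of $f_{p,r,s}$ realize the asserted $H_2$-actions, carrying out the $b^2_+=1$ chamber bookkeeping with the compatible homology orientations of Appendix~\ref{appendix}, and checking that the Luttinger-surgered manifolds stay minimal so that Taubes' $Gr=SW$ applies. All of this is routine given the composition law and Lemma~\ref{lem:projection}, so the genuine content lies in the selection of surgeries and the intervention of $Gr=SW$.
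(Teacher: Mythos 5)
Your proposal is correct and follows essentially the same route as the paper: both arguments combine the composition law~\eqref{eqn:composition} with the identification of $\Phi_\nN$ as the $\bar\theta$-projection from Lemma~\ref{lem:projection}, read off the first two coordinates of $SW_L(\fs)$ from the $p=0$ surgeries $f_{0,1,0},f_{0,0,1}$ (whose $H_2$-actions permute $x,y$ into $\bar\theta$), and in the symplectic case restrict to the Luttinger surgeries $f_{1,1,0},f_{1,0,1}$ to obtain the linear ``Gromov equations'' $Gr_{X_{f_{1,r,s}}}(A)\equiv Gr_X(A)+ra+sb$ and solve for $a,b$. The only cosmetic difference is that you phrase the computation via the dual functionals $e_{\bar\theta}^*\circ f_*$ and the relation between the $H_1$- and $H_2$-actions, whereas the paper works directly with the matrices in $SL_3(\ZZ)$; the content is identical.
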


\begin{proof}
Given $f\in\Diff_+(T^2\times S^1)$, the induced map on 2nd homology is represented by any matrix of the form $\left( \begin{smallmatrix}
*&*&r\\ *&*&s \\ *&*&p
\end{smallmatrix} \right)\in SL_3(\ZZ)$.

Since $\Phi_\nN$ is projection onto the 3rd coordinate by Lemma~\ref{lem:projection}, we look to compute the pair $(a,b)$ by permuting the basis of $\ZZ^3\cong H_2(T^3)$. The permutation matrices are given by general torus surgeries: $(p,r,s)=(1,0,0)$ and $(p,r,s)=(0,1,0)$ and $(p,r,s)=(0,0,1)$. Thus,~\eqref{eqn:composition} and~\eqref{eqn:compositionSW} together imply $a=SW_{X_{f_{0,1,0}}}(\fs)$ and $b=SW_{X_{f_{0,0,1}}}(\fs)$, where in the $b^2_+(X)=1$ case the invariants are computed in the chambers determined by the gluing formula \cite{KM:book}*{\S27.5}.

Now suppose that $p=1$. Then $f$ is a contactomorphism of $(T^3,\xi_0)$ if and only if $f_*$ acts trivially on $H_1(T^2)$ \cite{EP:Luttinger}*{Theorem 1.3.A}; these are the Luttinger surgeries \cite{Luttinger, ADK:Luttinger}. In that case, $f_*=\left( \begin{smallmatrix}
1&0&r\\0&1&s\\0&0&1
\end{smallmatrix} \right)$ and so~\eqref{eqn:composition} and~\eqref{eqn:compositionGr} together imply the following ``Gromov equations''
$$Gr_{X_{f_{1,r,s}}}(A)\equiv Gr_X(A)+ra+sb\mod2$$
We compute $a$ and $b$ by solving the system of two ``Gromov equations'' defined by $(r,s)=(1,0)$ and $(r,s)=(0,1)$.
\end{proof}

From the proof of this theorem we immediately obtain the following product formulas along $T^3$ of the Gromov and Seiberg--Witten invariants. The Seiberg--Witten formula appearing below is the main result of \cite{MMS:product}, as alluded to in the ``notes and references'' at the end of \cite{KM:book}*{Chapter IX}.\footnote{We are not using twisted local coefficients, so \cite{KM:book}*{\S38.2} does not apply here. Such coefficient systems may instead be used to recover the main result of \cite{Taubes:tori}.}

\begin{cor}
\label{cor:main}
In the notation of Theorem~\ref{thm:main},
$$Gr_{X_{f_{1,r,s}}}(A)\equiv_{(2)}r\cdot Gr_{X_{f_{1,1,0}}}(A)+s\cdot Gr_{X_{f_{1,0,1}}}(A)+(1-r-s)\cdot Gr_X(A)$$
and
$$SW_{X_{f_{p,r,s}}}(\fs)=p\cdot SW_X(\fs)+r\cdot SW_{X_{f_{0,1,0}}}(\fs)+s\cdot SW_{X_{f_{0,0,1}}}(\fs)$$
where in the $b^2_+(X)=1$ case the Seiberg--Witten invariants are computed in corresponding chambers.
\end{cor}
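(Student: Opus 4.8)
The plan is to read off both product formulas from the linear algebra already assembled in the proof of Theorem~\ref{thm:main}; no genuinely new geometry should be required, since the two formulas are just the two ways of ``inverting'' the identifications of $SW_L(\fs)$ and $Gr_L(A)$ with triples in $\ZZ^3$ and $(\ZZ/2)^3$.

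I would treat the Seiberg--Witten formula first, as it is the cleaner (linear) one. Start from the composition law~\eqref{eqn:composition}, $SW_{X_{f_{p,r,s}}}(\fs)=\Phi_\nN\circ(f_{p,r,s})_*\circ SW_L(\fs)$, which holds for \emph{every} orientation-preserving $f$ because it is purely gauge-theoretic and uses no symplectic structure. By Lemma~\ref{lem:projection} the map $\Phi_\nN$ is the projection of $H_2(T^3)\cong\ZZ\langle x,y,\bar\theta\rangle$ onto the $\bar\theta$-summand, so $\Phi_\nN\circ(f_{p,r,s})_*$ is the linear functional extracting the $\bar\theta$-coefficient of the image under $(f_{p,r,s})_*$. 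The key input is the homological action recorded in the proof of Theorem~\ref{thm:main}: on the basis $(x,y,\bar\theta)$ this functional takes the values $(r,s,p)$. Feeding in $SW_L(\fs)=\big(SW_{X_{f_{0,1,0}}}(\fs),\,SW_{X_{f_{0,0,1}}}(\fs),\,SW_X(\fs)\big)$ from Theorem~\ref{thm:main} then gives
\[
SW_{X_{f_{p,r,s}}}(\fs)=r\cdot SW_{X_{f_{0,1,0}}}(\fs)+s\cdot SW_{X_{f_{0,0,1}}}(\fs)+p\cdot SW_X(\fs),
\]
with the chamber on each side (when $b^2_+(X)=1$) being the one selected by the gluing formula \cite{KM:book}*{\S27.5}, exactly as in Theorem~\ref{thm:main}.

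For the Gromov formula I would not re-derive anything but substitute directly into the ``Gromov equations'' obtained at the end of the proof of Theorem~\ref{thm:main}. There, restricting to the Luttinger ($p=1$) surgeries --- the only ones preserving symplecticity and minimality, hence the only ones for which $Gr$ is defined --- produced $Gr_{X_{f_{1,r,s}}}(A)\equiv Gr_X(A)+ra+sb\pmod 2$, where by Definition~\ref{defn:Gr} and~\eqref{eqn:compositionGr} the pair $(a,b)$ equals $\big(Gr_{X_{f_{1,1,0}}}(A)-Gr_X(A),\,Gr_{X_{f_{1,0,1}}}(A)-Gr_X(A)\big)$. Substituting and collecting the $Gr_X(A)$ terms yields
\[
Gr_{X_{f_{1,r,s}}}(A)\equiv r\cdot Gr_{X_{f_{1,1,0}}}(A)+s\cdot Gr_{X_{f_{1,0,1}}}(A)+(1-r-s)\cdot Gr_X(A)\pmod 2,
\]
the apparent asymmetry with the Seiberg--Witten formula (an affine rather than linear combination) being an artifact of being confined to the $p=1$ slice in the symplectic category.

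The one step that needs care --- and the main obstacle --- is the homological bookkeeping for general $p$ in the Seiberg--Witten case. Unlike the Luttinger case, where $f_{1,r,s}(x,y,\theta)=(x+r\theta,y+s\theta,\theta)$ is an honest explicit diffeomorphism, for $|p|\ne1$ there is no such simple formula (one cannot take $\theta\mapsto p\theta$), so I must verify that $f_{p,r,s}$ genuinely denotes an orientation-preserving diffeomorphism for which the functional $\Phi_\nN\circ(f_{p,r,s})_*$ really equals $(r,s,p)$ on $(x,y,\bar\theta)$ --- equivalently that $(f_{p,r,s}^{-1})_*\bar\theta=rx+sy+p\bar\theta$ in $H_1(T^3)$. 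This is precisely where the $SL_3(\ZZ)$ normalization in the proof of Theorem~\ref{thm:main} does the work, and where I would double-check that the value of the functional depends only on $(p,r,s)$ and not on the remaining freedom in $f$. Once that is settled, both displayed formulas fall out of the computations already in place.
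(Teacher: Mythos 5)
Your proposal is correct and follows essentially the same route as the paper, which derives the corollary immediately from the proof of Theorem~\ref{thm:main}: the Seiberg--Witten formula by applying the functional $\Phi_\nN\circ(f_{p,r,s})_*$ (with values $(r,s,p)$ on the basis) to $SW_L(\fs)$, and the Gromov formula by substituting the identified pair $(a,b)$ into the ``Gromov equations.'' Your extra caution about the well-definedness of the functional for general $p$ is reasonable but is already absorbed by the $SL_3(\ZZ)$ normalization in the proof of Theorem~\ref{thm:main}, exactly as you suspect.
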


\begin{remark}
Generally, given a link of $N$ tori we get elements in $\bigotimes_{k=1}^NECH_0(T^3,\xi_0,0)\cong(\ZZ/2)^{3^N}$ via a straightforward generalization of Theorem~\ref{thm:class}, and these tuples of integers repackage the various Seiberg--Witten invariants obtained by performing torus surgeries on the generating loops of each torus.
\end{remark}

\begin{remark}
The gauge-theoretic invariant of $(X,L)$ specified in \cite{FS:Lagrangian}*{Proposition 2.1} is precisely the collection of the invariants $\sum_A\big(\sum_{\fs\;|\;c_1(\fs)=A}SW_{X_{f_{p,r,s}}}(\fs)\big) A\in \ZZ H_2(X_0,\partial X_0)$ as each $p,r,s$ ranges over the integers, and thus is recovered by the collection of invariants $SW_L(\fs)$ as $\fs$ ranges over relative spin-c structures on $X_0$. In particular, for $X$ symplectic (and minimal) and $L$ Lagrangian, this collection of (mod 2) Seiberg--Witten invariants may be recovered by the collection of ECH invariants $Gr_L(A)$ thanks to the gluing formula~\eqref{eqn:composition}.
\end{remark}

%%%%%%%%%%%%%%%%%%%%%%%%%%%%%%%%%%%%%%%%%%%%%%%%%%%%
%%%%%%%%%%%%%%%%%%%%%%%%%%%%%%%%%%%%%%%%%%%%%%%%%%%%
\appendix\section{Appendix: homology orientations}
\label{appendix}

As explained in \cite{KM:book}*{\S 20, \S28.4}, to define the monopole Floer groups and cobordism maps we must first choose a \textit{(cobordism) homology orientation} of $X:Y_+\to Y_-$. This is an orientation of
$$\operatorname{det}^+(X):=\det H^1(X;\RR)\otimes\det I^+(X;\RR)\otimes\det H^1(Y_+;\RR)$$
where $I^+(X;\RR)$ is defined as follows (see also \cite{KM:book}*{\S3.4}): The relative cap-product pairing
$$H^2(X,\partial X;\RR)\times H^2(X;\RR)\to H^4(X,\partial X;\RR)\cong\RR$$
induces a nondegenerate quadratic form on the kernel of the restriction map $H^2(X;\RR)\to H^2(\partial X;\RR)$, and $I^+(X;\RR)\subset H^2(X;\RR)$ is a maximal nonnegative subspace for this quadratic form. The set of homology orientations is denoted by $\Lambda(X)$.

\begin{remark}
In the case that $Y_\pm=\varnothing$, we recover the notion of homology orientation of a closed 4-manifold in Section~\ref{Closed 4-manifolds}. When this closed 4-manifold is equipped with a symplectic form there is a canonical homology orientation \cite{Taubes:Gr=SW}*{\S 1.c}.
\end{remark}

There is also a composition law for (cobordism) homology orientations \cite{KM:book}*{\S3.4, \S26.1}. The relevant 4-manifold $X$ in this paper can be viewed as the composition of cobordisms
$$\varnothing\overset{\nN}{\xleftarrow{\hspace*{.75cm}}}T^3\overset{X_0}{\xleftarrow{\hspace*{.75cm}}}\varnothing$$
and then there is a specification
$$\Lambda(X)=\Lambda(\nN)\otimes_{\ZZ/2}\Lambda(X_0)$$
so that a choice of homology orientation for any two objects in $\lbrace X,\nN,X_0\rbrace$ determines a homology orientation of the third object.

A \textit{homology orientation} of $T^3$ is an orientation of the vector space $H^1(T^3;\RR$), and this was fixed at the beginning of Section~\ref{Floer homologies of 3-tori}. It is used to specify the duality isomorphism between monopole Floer (co)homology groups upon orientation-reversal of $T^3$ (see Section~\ref{Gradings}).

With that said, the main property of homology orientations implicitly used in this paper is the following.

\begin{prop}
Choose a basis of $H^1(T^2;\RR)$ and $H^1(T^3;\RR)$. Then a homology orientation of $X$ determines that of its torus surgeries $X_f$ for $f\in\Diff_+(T^3)$.
\end{prop}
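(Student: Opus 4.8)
The plan is to combine the composition law for homology orientations from the Appendix with the observation that the filling $\nN$ is orientation-theoretically rigid. First I would pin down a canonical homology orientation $\Lambda(\nN)$ from the given data. Since $\nN\cong T^2\times D^2$ deformation retracts onto $T^2$, the factor $\det H^1(\nN;\RR)\cong\det H^1(T^2;\RR)$ is oriented by the chosen basis of $H^1(T^2;\RR)$; moreover the restriction map $H^2(\nN;\RR)\to H^2(T^3;\RR)$ is injective (both sides being spanned by the area class of $T^2$), so $I^+(\nN;\RR)=0$ and the factor $\det I^+(\nN;\RR)\cong\RR$ is canonically oriented (this is the $b^2_+(\nN)=0$ noted in the footnote to Lemma~\ref{lem:projection}). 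The chosen basis of $H^1(T^3;\RR)$ supplies the homology orientation of the gluing hypersurface $T^3$ that the composition law requires. Thus all the orientation data attached to $\nN$ and to the gluing boundary is fixed once and for all.

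Next I would invoke the composition law $\Lambda(X)=\Lambda(\nN)\otimes_{\ZZ/2}\Lambda(X_0)$ from the Appendix. Because the $\ZZ/2$-torsor $\Lambda(\nN)$ is now canonical, the given homology orientation of $X$ determines one of $X_0$; this is precisely the common homology orientation of $X_0$ referred to at the start of Section~\ref{Surgeries}. Since $X_0$ is untouched by the surgery, this orientation is shared by every $X_f$.

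Finally I would reapply the composition law to the regluing $X_f=X_0\cup_f\nN$, obtaining $\Lambda(X_f)=\Lambda(\nN)\otimes_{\ZZ/2}\Lambda(X_0)$. The law identifies $\partial X_0$ with $\partial\nN$ via $f$ and requires a fixed orientation of the gluing hypersurface, namely of $\det H^1(T^3;\RR)$; consistency therefore demands that $f$ preserve this orientation. This is the crux of the argument: because $f$ is orientation-preserving on the $3$-manifold $T^3$, its action on $H_3(T^3;\ZZ)\cong\ZZ$ is trivial, and the identification $H_3(T^3)\cong\Lambda^3H_1(T^3)$ forces $f_*\in SL_3(\ZZ)$ on $H_1(T^3;\ZZ)$; hence $\det(f^*)=1>0$ on $H^1(T^3;\RR)$ and the chosen boundary orientation is preserved. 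With this verified the composition law applies verbatim and produces $\Lambda(X_f)$, as desired. I expect this orientation-preservation check for $f$ to be the only genuine obstacle; the remainder is a formal application of the composition law to an orientation-rigid filling.
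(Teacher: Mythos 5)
Your proposal is correct and follows essentially the same route as the paper: the composition law $\Lambda(X)=\Lambda(\nN)\otimes_{\ZZ/2}\Lambda(X_0)$ reduces everything to pinning down $\Lambda(\nN)$, which is done by observing $I^+(\nN;\RR)=0$ so that $\Lambda(\nN)\cong\det H^1(T^3;\RR)\otimes\det H^1(T^2;\RR)$ is fixed by the chosen bases. Your additional check that $f\in\Diff_+(T^3)$ acts with determinant $+1$ on $H^1(T^3;\RR)$, so the boundary orientation is preserved under regluing, is a point the paper leaves implicit, and it is correctly handled.
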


\begin{proof}
By the composition law it suffices to show that the chosen basis pins down the homology orientation of $\nN$. But this is immediate, because $I^+\subset\text{Ker}\left(H^2(\nN;\RR)\to H^2(\partial\nN;\RR)\right)=0$ and hence $\Lambda(\nN)\cong \det H^1(T^3;\RR)\otimes\det H^1(T^2;\RR)$.
\end{proof}

%%%%%%%%%%%%%%%%%%%%%%%%%%%%%%%%%%%%%%%%%%%%%%%%%%%%
%%%%%%%%%%%%%%%%%%%%%%%%%%%%%%%%%%%%%%%%%%%%%%%%%%%%
\begin{bibdiv}
\begin{biblist}
\bibselect{bibDec2020}
\end{biblist}
\end{bibdiv}

\end{document}